\DeclareSymbolFont{pssymbols}     {OMS}{ztmcm}{m}{n}
\DeclareSymbolFontAlphabet{\mathpsscr}   {pssymbols}
\theoremstyle{plain}%
\newtheorem{thm}{Theorem}[section]
\newtheorem{prop}[thm]{Proposition}
\newtheorem{lem}[thm]{Lemma}
\newtheorem*{thm*}{Theorem}
\theoremstyle{definition}%
\theoremstyle{remark}%
\newtheorem*{rems*}{Remarks}
\newtheorem*{notation}{Notation}
\newtheorem*{warning}{Warning}
\setlist[enumerate,1]{label=\textup{(\roman*)}}
\setlist[enumerate]{font=\normalfont}
\newlength{\myleftmargin}\setlength{\myleftmargin}{.75cm}
\setlist[enumerate]{leftmargin=\myleftmargin}
\numberwithin{equation}{section}
\newcommand{\CC}{{\mathbf C}}
\newcommand{\RR}{{\mathbf R}}
\newcommand{\QQ}{{\mathbf Q}}
\newcommand{\ZZ}{{\mathbf Z}}
\newcommand{\EE}{{\mathbf E}}
\newcommand{\HH}{{\mathbf H}}
\newcommand{\Cat}{{\operatorname{\mathbf C}}}
\newcommand{\Mor}{\operatorname{Mor}}
\newcommand{\Sh}{\operatorname{\mathfrak S\mathfrak h}}
\newcommand{\Derived}{{\operatorname{\mathbf D}}}
\newcommand{\Derc}{\Derived_c}
\let\Perv\Perverse
\let\D\Dmodule
\renewcommand{\i}{i}		% inclusion of stratum
\DeclareMathOperator{\id}{id}
\let\Sh\Sheaf
\renewcommand{\P}{\mathcal P}
\newcommand{\C}{{\mathcal C}}
\newcommand{\I}{\mathcal I}
\newcommand{\lk}{\kappa}       % local link cohomology
\DeclareMathOperator{\mS}{SS}
\newcommand{\Xhat}{\widehat{X}}
\DeclareMathOperator{\Lk}{Lk}
\DeclareMathOperator{\Sp}{Sp}
\newcommand{\al}{\alpha}
\renewcommand{\gg}{{\mathfrak g}}
\renewcommand{\ll}{{\mathfrak l}}
\newcommand{\nn}{{\mathfrak n}}
\newcommand{\X}{\mathcal X} % stratification
\newcommand{\Ext}{\operatorname{Ext}}
\newcommand{\codim}{\operatorname{codim}}
\newcommand{\pbar}{{\bar p}}    % classical perversity
\newcommand{\lsp}[1]{{}^{#1}}
\newcommand{\p}{\lsp{p}}
\newcommand{\n}[1]{\nobreakdash#1\hspace{0pt}}
\renewcommand{\t}[1]{$t$\nobreakdash#1\hspace{0pt}}
\renewcommand{\l}{\ell}
\DeclareMathOperator{\depth}{depth}
\begin{document}

\title[Perverse Sheaves]{Perverse sheaves and the reductive Borel-Serre
compactification}
\author{Leslie Saper}
\dedicatory{Dedicated to Steve Zucker, with great respect and admiration}
\address{Department of Mathematics\\Duke University\\Durham, NC 27708}
\email{saper@math.duke.edu}
\date{\today}
%\subjclass[2010]{Primary 20F34, 22E40, 22F30; Secondary 14M27, 20G30}

\maketitle

\section{Introduction}

This paper is a report on work in progress to better understand the
category of perverse sheaves on the Baily-Borel compactification $X^*$ of a
locally symmetric variety by using
perverse sheaves on the reductive Borel-Serre compactification $\Xhat$.

\subsection{Definition}
Perverse sheaves were introduced in \cite{refnBeilinsonBernsteinDeligne} by
Beilinson, Bernstein, and Deligne following Goresky and MacPherson's
introduction of intersection homology \cite{refnGoreskyMacPhersonIHOne}.
(For a full account of the exciting discoveries and interactions around that
time, see Kleiman's excellent history
\cite{refnKleimanIntersectionHomology}).  We begin by briefly recalling the
definition (and to avoid confusion, we point out, as in
\cite{refnBeilinsonBernsteinDeligne}, that a perverse sheaf is not actually
a sheaf nor is it perverse).

Let $Y$ be a stratified pseudomanifold and fix an integer-valued function
on the set of strata of $Y$ called the \emph{perversity};%
\footnote{The origin of the term ``perversity'' goes back to the creation
  of intersection homology by Goresky and MacPherson
  \cite{refnGoreskyMacPhersonIHOne} where chains were constrained so that
  the dimension of their intersection with a singular stratum $S$ was at
  most $p(S)-p(S_0)-1$ more than that allowed by transversality ($S_0$
  being the open dense stratum).}
if all the strata of $Y$ have even dimension, it is most useful to take for
$p$ the \emph{middle perversity}, $p(S) = -(1/2)\dim S$.  Let $\Derc^b(Y)$
be the constructible bounded derived category of $Y$; objects may be
represented by complexes of sheaves on $Y$ (up to quasi-isomorphism) whose
cohomology lives in a bounded range of degrees and is locally constant
along each stratum.  By definition, a perverse sheaf $\P$ on $Y$
(constructible with respect to the given stratification) is an object $\Sh$
whose local cohomology $H(i_S^*\Sh)$ along each stratum $i_S\colon
S\hookrightarrow Y$ lives in degrees $\le p(S)$ and whose local cohomology
supported on $S$, $H(i_S^!\Sh)$, lives in degrees $\ge p(S)$.  The perverse
sheaves on $Y$ form an abelian category $\Perv(Y)$ which is Artinian and
Noetherian.  For every connected stratum $S$ and irreducible local system
$\EE$ on $S$ there is a simple perverse sheaf $\P_S(\EE)=\P_{p,S}(\EE)$ and
all simple objects are obtained in this fashion.  In fact $\P_S(\EE)$ is
the intersection cohomology sheaf of $\overline{S}$ with coefficients in
$\EE$ shifted by $-p(S)$.

When $Y$ is a complex algebraic variety, we will always consider the middle
perversity.  In addition, one usually does not fix the (algebraic)
stratification in the definitions of $\Derc^b(Y)$ and $\Perv(Y)$.

\subsection{Two original applications}
Perverse sheaves have played a critical role in the topology of algebraic
varieties and representation theory from their beginning; we
mention just two examples from the early days.  For a fuller survey of
applications, see \cite{refndeCataldoMiglioriniDecompositionTheorem}.

The first example is the Kazdhan-Lusztig conjecture.  Let $Y=G/B$ be the
flag variety associated to a simply connected semisimple complex algebraic
group $G$.  It is stratified by its $B$-orbits $S_w$ which are indexed by
$w$ in the Weyl group $W$ of $G$; each closure $\overline{S_w}$ is a
Schubert variety.  The correspondence $w \leftrightarrow S_w$ is order
preserving in the sense that $y\le w$ if and only if $S_y \subset
\overline{S_w}$.  For $y\le w\in W$, Kazdhan and Lusztig
\cite{refnKazhdanLusztigCoxeterGroups} gave a conjectural formula for the
multiplicity of the irreducible $\gg$\n-module $L_y$ with highest weight
$y\rho-\rho$ in the composition series of the Verma module $M_w$ with
highest weight $w\rho-\rho$.  The formula involved a combinatorially
defined polynomial $P_{y,w}(q)$ which appeared to be related to the failure
of Poincar\'e duality on $\overline{S_w}$; later they showed
\cite{refnKazhdanLusztigSchubertVarieties} that the coefficients in
$P_{y,w}(q)$ were $\dim H^k(i_{S_y}^*\P_{S_w}(\CC))$, the local Betti
numbers along $S_y$ of the simple perverse sheaf associated to $S_w$.

The conjecture was resolved independently by Beilinson and Bernstein
\cite{refnBeilinsonBernsteinKazhdan-LusztigConjecture} and by Brylinski and
Kashiwara \cite{refnBrylinskiKshiwaraKazhdan-LusztigConjecture} by
transferring the problem from representations to perverse sheaves.
Specifically one shows that a certain category of $\gg$\n-modules
(including all $L_y$ and $M_w$) is equivalent to the category of regular
holonomic $\D_Y$\n-modules and that by the Riemann-Hilbert
correspondence this category is equivalent to $\Perv(Y)$.

% Perhaps mention what perverse sheaves L and M map to and how the result
% is concluded. 

The second original example of the importance of perverse sheaves is the
decomposition theorem.  This deep result about the topology of proper 
algebraic maps was first proved by Beilinson, Bernstein, Deligne, and
Gabber \cite{refnBeilinsonBernsteinDeligne}*{\emph{Th\'eor\`eme} 6.2.5} and
later extended by Saito \citelist{\cite{refnSaitoModulesHodgePolarisables}
  \cite{refnSaito}}.  It says that if $X\to Y$ is a proper morphism of
algebraic varieties, then the direct image of a simple perverse sheaf of
Hodge type on $X$ decomposes into the direct sum of shifted simple perverse
sheaves, likewise of Hodge type, supported on subvarieties of $Y$.  Here by
a simple perverse sheaf of Hodge type we mean one that corresponds to a
polarizable Hodge module in the sense of Saito; by
\citelist{\cite{refnSaitoModulesHodgePolarisables}*{\emph{Lemmes} 5.1.10,
    5.2.12} \cite{refnSaito}*{Theorem 0.2}}, such simple perverse sheaves
are precisely those of the form $\P_S(\EE)$ where $\EE$ underlies a real
polarizable variation of Hodge structure.  As a special case, if
$\widetilde Y \to Y$ is a resolution of singularities, the theorem implies
that the ordinary cohomology $H^\cdot( \widetilde Y;\CC)$ is the direct sum
of shifted middle perversity intersection cohomology groups of subvarieties
of $Y$ with various local systems as coefficients.  Furthermore one summand
will be $IH^\cdot(Y;\CC)$, the intersection cohomology of $Y$ itself.

The proof in \cite{refnBeilinsonBernsteinDeligne} is for a simple perverse
sheaf of geometric origin and proceeds by first dealing with the theorem
over the algebraic closure of a finite field and then lifting the result to
$\CC$.  The version of the theorem stated above was proved by Saito
\citelist{\cite{refnSaitoModulesHodgePolarisables} \cite{refnSaito}} using
purely characteristic $0$ methods.  It uses mixed Hodge modules, which
are regular holonomic $\D$\n-modules equipped with various filtrations and
satisfying certain conditions; they correspond to the mixed perverse
sheaves over finite fields considered in
\cite{refnBeilinsonBernsteinDeligne}.  A pure Hodge module corresponds
under the Riemann-Hilbert correspondence to a simple perverse sheaf of
Hodge type.

More recently, other proofs and generalizations of the decomposition
theorem have been given.  A proof of the constant coefficient case using
classical Hodge theory was given by de Cataldo and Migliorini
\cite{refndeCataldoMiglioriniHodgeTheory}; for more details and a
discussion of the different approaches to the decomposition theorem see
also their survey \cite{refndeCataldoMiglioriniDecompositionTheorem}.
Kashiwara has conjectured \cite{refnKashiwaraSemisimpleHolonomic} that the
decomposition theorem should hold for \emph{any} simple perverse sheaf.
This conjecture has been proven analytically using polarizable pure twistor
$\D$\n-modules by work of Sabbah \cite{refnSabbahTwistorD-modules} and
Mochizuki \citelist{\cite{refnMochizukiTameHarmonicBundlesOne}
  \cite{refnMochizukiTameHarmonicBundlesTwo}}.  At about the same time, an
arithmetic proof was given independently by B\"ockle and Khare
\cite{refnBoeckleKhareArithmeticFundamentalGroups} and by Gaitsgory
\cite{refnGaitsgoryDeJongsConjecture}; they proved de Jong's conjecture
\cite{refndeJongArithmeticFundamentalGroups} which by Drinfeld
\cite{refnDrinfeldKashiwaraConjecture} implies Kashiwara's conjecture.
Note that Kashiwara actually conjectures the theorem should hold for a
simple holonomic $\D_X$\n-module with possibly irregular singularities;
this has been settled by Mochizuki \cite{refnMochizukiWildHarmonicBundles}.

\subsection{Locally symmetric varieties}
Our main interest here is the category of perverse sheaves on a locally
symmetric variety.  Let $X = \Gamma\backslash D = \Gamma \backslash G(\RR)
/ A_G(\RR) K$ be an arithmetic quotient of a symmetric space of noncompact
type; here $G$ is a reductive algebraic group defined over $\QQ$, $K$ a
maximal compact subgroup of $G(\RR)$, $A_G$ is the maximal $\QQ$\n-split
torus in the center of $G$, and $\Gamma$ is an arithmetic subgroup.  Our
main focus will be when $D$ is Hermitian symmetric and unless otherwise
noted we shall assume that in this introduction, however some non-Hermitian
symmetric spaces will arise as well.

A natural choice of compactification in the Hermitian case is the
Baily-Borel compactification $X^*$ \cite{refnBailyBorel} (topologically it
is one of Satake's compactifications).  The Baily-Borel compactification is
a projective algebraic variety defined over a number field; it is commonly
called a locally symmetric variety.  Locally symmetric varieties and
Shimura varieties, their adelic variants, play an important role in number
theory, automorphic forms, and the Langlands's program.

The Baily-Borel compactification $X^*$ has a natural stratification
$\coprod_{Q} F_Q$, where $Q$ ranges over all $\Gamma$\n-conjugacy classes
of saturated parabolic $\QQ$\n-subgroups of $G$; when $G$ is almost
$\QQ$\n-simple the saturated condition simply means $Q$ is  maximal parabolic
$\QQ$\n-subgroup.  Each $F_Q$ is again a Hermitian locally symmetric space
associated to a reductive group $L_{Q,h}$, a quotient of $Q$; its closure
$\overline{F_Q}$ in $X^*$ is a subvariety and is the Baily-Borel
compactification $F_Q^*$ of $F_Q$.  We fix this stratification when
considering perverse sheaves on $X^*$.

On the locally symmetric space $X$ one usually considers local systems
$\EE$ that arise from a representation $E$ of $G$ as opposed to merely a
representation of $\Gamma$.  We say
that the simple perverse sheaf $\P_X(\EE)$ on $X^*$ is \emph{reductively
  constructible} if the coefficient system $\EE$ arises from a
representation $E$ of $G$ as above; similarly $\P_{F_Q}(\EE)$ is
reductively constructible if $\EE$ arises from a representation of
$L_{Q,h}$.

Zucker's conjecture \cite{refnZuckerWarped} gives an analytic realization
of a simple reductively constructible perverse sheaf $\P_X(\EE)$.  The
choice of an admissible inner product \cite{refnMatsushimaMurakami} on $E$
induces a metric on $\EE$ which we assume fixed.  The conjecture then
states that there is a natural isomorphism $\P_X(\EE) \cong \mathscr
L_2(X^*;\EE)[-p(X)]$, where $\mathscr L_2(X^*;\EE)$ is the sheafification
of the presheaf that associates to $U\subset X^*$ the complex of
$\EE$\n-valued differential forms on $U\cap X$ which, together with their
exterior derivative, are $L^2$ with respect to the natural locally
symmetric metric on $X$ and the above metric on $\EE$.  The conjecture was
independently settled at about the same time by Looijenga
\cite{refnLooijenga} and Saper and Stern \cite{refnSaperSternTwo} using
quite different methods.

The conjecture thus shows that the reductively constructible simple
perverse sheaves $\P_{F_Q}(\EE)$ are related to representations
$L^2(\Gamma_{L_{Q,h}}\backslash L_{Q,h}(\RR))$ (see
\cite{refnBorelGarland}) and hence to automorphic forms.  The full category
of (not necessarily simple) perverse sheaves on $X^*$ is thus clearly worth
further study.

\subsection{The reductive Borel-Serre compactification}
\label{ssectRBS}
In general $X^*$ is very singular so it may be profitable to study its
perverse sheaves by using a resolution.

A natural choice for an algebraic resolution, though non-unique, is one of
the smooth toroidal compactifications $\tilde\pi\colon \widetilde X_\Sigma
\to X^*$ \cite{refnAshMumfordRapoportTai}.  Let $\P_X(\EE)$ be a reductively
constructible simple perverse sheaf on $X^*$.  Let $U =\tilde\pi^{-1}(X)
\subset \widetilde X_\Sigma$; via $\tilde\pi$ we can view $\EE$ also as a
local system on $U$.  The decomposition theorem applies%
\footnote{We only need the version proved by Saito here by the following
  argument.  In the Hermitian case, Zucker
  \cite{refnZuckerLocallyHomogeneous} has shown $\EE$ underlies a complex
  polarizable variation of Hodge structure; it only a real polarizable
  variation of Hodge structure if the representation $E$ is real.  However
  $\EE\otimes_{\RR} \CC$ does underlie a real polarizable variation of
  Hodge structure so Saito's decomposition theorem applies to it.  Now
  $\EE\otimes_{\RR} \CC \cong \EE \oplus \overline{\EE}$ and  one can prove
  that the decomposition theorem holds for a direct sum if and only if it
  holds for each summand.}
to show that $\tilde\pi_*\P_U(\EE)$ is the direct sum of (shifted) simple
perverse sheaves on $X^*$, one of which is $\P_X(\EE)$.

However this argument does not easily extend to treat simple perverse
sheaves supported on smaller closed strata of $X^*$.  For example 
$\tilde\pi^{-1}(F_{Q})$ is rarely smooth nor irreducible.

To get around this we look instead at the reductive Borel-Serre
compactification $\Xhat$.  This was introduced by Zucker in the same paper
\cite{refnZuckerWarped} where he made his conjecture about the
$L^2$\n-cohomology of $X$.  (It is actually defined for any arithmetic
locally symmetric space, not necessarily Hermitian.)  It is non-algebraic
(in fact it may have odd dimensional strata) but it is canonically
associated to $X$ and its singularities are easy to describe.  Zucker
\cite{refnZuckerSatakeCompactifications} showed that there is a continuous
quotient map $\pi\colon \Xhat \to X^*$ extending the identity on $X$ so
that $\Xhat$ may be viewed as a partial resolution of singularities.
Goresky and Tai \cite{refnGoreskyTaiToroidalReductiveBorelSerre} show that
up to homotopy any $\tilde\pi$ factors through $\Xhat$.  So in this sense
it is a minimal partial resolution.

The reductive Borel-Serre compactification $\Xhat$ has a natural
stratification $\coprod_{P} X_P$, where $P$ ranges over all
$\Gamma$\n-conjugacy classes of parabolic $\QQ$\n-subgroups of $G$.  Each
$X_P$ is again an arithmetic locally symmetric space associated to a
reductive group, namely $L_P$, the Levi quotient of $P$; its closure
$\overline{X_P}$ in $\Xhat$ is the reductive Borel-Serre compactification
$\Xhat_P$ of $X_P$.  We fix this stratification when considering perverse
sheaves on $\Xhat$.  Since $\Xhat$ may have odd dimensional strata, there
are two middle perversities, $p_-$ and $p_+$ (see
\eqref{eqnMiddlePerversities}), that we will consider, both depending only
on the dimension of the stratum.  A simple perverse sheaf $\P_{X_P}(\EE)$
on $\Xhat$ is \emph{reductively constructible} if $\EE$ is induced from a
representation $E$ of $L_P$.

Despite $\Xhat$ not being algebraic, it ``wants'' to be algebraic.  For
example, Zucker showed \cite{refnZuckerRBSThree} that its cohomology
carries a mixed Hodge structure such that
\[
H^\cdot(X^*) \xrightarrow{\pi^*} H^\cdot(\Xhat) \xrightarrow{i_X^*}
H^\cdot(X)
\]
are morphisms of mixed Hodge structures.  (Here the outer two cohomology
groups carry Deligne's canonical mixed Hodge structure.)  Furthermore,
Ayoub and Zucker \cite{refnAyoubZuckerRBSMotive} construct a motive
corresponding to $\Xhat$.

As another example of the algebraic-like nature of $\Xhat$ is the
conjecture of Rapoport \cite{refnRapoport} and of Goresky and MacPherson
\cite{refnGoreskyMacPhersonWeighted}.  This conjecture was proved by Saper
and Stern \cite{refnRapoport}*{Appendix} when the  $\QQ$\n-rank of $G$ is
$1$, and by Saper \cite{refnSaperLModules} in general.  It states that for
a local system $\EE$ associated to a representation of $G$, 
\begin{equation}
  \label{eqnRapoportConj}
   \pi_* \P_{X_G}(\EE) = \P_{F_G}(\EE)\ ;
\end{equation}
Here the perversity on the left (for $\Xhat$) can be either of the two
middle perversities---one obtains the same pushforward.

Unlike in the situation for $\tilde\pi\colon \widetilde X_\Sigma \to X^*$,
the methods of \cite{refnSaperLModules} can be used to generalize
\eqref{eqnRapoportConj} to all reductively constructible simple perverse
sheaves $\P_{X_P}(\EE)$ on $\Xhat$.  Specifically (see
\cite{refnSaperLModules}*{\S21}) for every parabolic $\QQ$\n-subgroup $P$,
there is a saturated parabolic $\QQ$\n-subgroup $P^\dag$ so that $\pi(X_P)
= F_{P^\dag}$.  In fact, $\pi|_{X_P}\colon X_P \to F_{P^\dag}$ is a flat
bundle which becomes trivial over a finite cover of $F_{P^\dag}$; the fiber
is $X_{P,\ell}$, a locally symmetric space \emph{not} usually of Hermitian
type.  We have the following extension of the decomposition theorem despite
$\Xhat$ not being algebraic:
\begin{thm*}[Decomposition theorem for $\pi\colon \Xhat \to X^*$]
  Let $\EE$ be an irreducible local system on a stratum $X_P$ of $\Xhat$
  which is induced from an algebraic representation of $L_P$.  Let $p$ be a
  middle perversity.  Then
  \begin{equation}
  \pi_* \P_{p,X_P}(\EE) = \bigoplus_i
  \P_{F_{P^\dag}}( \HH^i(\Xhat_{P,\ell};\P_{p, X_{P,\ell}}(\EE) )) [-i] \ .
\end{equation}
\end{thm*}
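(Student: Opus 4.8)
The plan is to obtain the identity by rerunning, relative to an arbitrary parabolic $P$, the $\mathcal{L}$-module and micro-support arguments of \cite{refnSaperLModules} that prove the case $P=G$, namely equation \eqref{eqnRapoportConj}. Since $\P_{p,X_P}(\EE)$ is supported on $\overline{X_P}=\Xhat_P$ and $\pi(\Xhat_P)=\overline{F_{P^\dag}}=F^*_{P^\dag}$, it suffices to compute $(\pi|_{\Xhat_P})_*\P_{p,X_P}(\EE)$ and to check it equals the asserted object, which is manifestly a direct sum of shifted simple perverse sheaves on $F^*_{P^\dag}\subseteq X^*$; here $\HH^i(\Xhat_{P,\ell};\P_{p,X_{P,\ell}}(\EE))$ denotes the local system on $F_{P^\dag}$ with that group as stalk, its monodromy induced by the flat-bundle structure of $X_P\to F_{P^\dag}$. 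There are two things to establish: first, that the two sides agree over the open stratum $F_{P^\dag}$; and second --- the substantive point --- that $\pi_*\P_{p,X_P}(\EE)$ satisfies the support and cosupport estimates along the deeper strata $F_Q\subsetneq F^*_{P^\dag}$ forcing it to be the intersection-cohomology extension from $F_{P^\dag}$ of what the first step produces, with no summand supported on a proper subvariety of $F^*_{P^\dag}$.

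For the first step I would use proper base change: the stalk of $\pi_*\P_{p,X_P}(\EE)$ at $y\in F_{P^\dag}$ is the hypercohomology of the fibre $\pi^{-1}(y)\cap\Xhat_P$, which by \cite{refnSaperLModules}*{\S21} is a finite quotient of $\Xhat_{P,\ell}$, taken with coefficients in the restriction of $\P_{p,X_P}(\EE)$. As $F_{P^\dag}$ is the smooth open stratum and, by the same reference, $X_P\to F_{P^\dag}$ is a flat bundle that becomes trivial over a finite cover, the inclusion of this fibre is normally nonsingular, so the restriction of $\P_{p,X_P}(\EE)$ to it is $\P_{p,X_{P,\ell}}(\EE)$ up to a degree shift; moreover, since a bounded complex of vector spaces is formal and formality descends along the finite cover, $\pi_*\P_{p,X_P}(\EE)$ is over $F_{P^\dag}$ the direct sum of its shifted cohomology sheaves, which are precisely the local systems $\HH^i(\Xhat_{P,\ell};\P_{p,X_{P,\ell}}(\EE))$ above. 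The identity $\dim X_P=\dim F_{P^\dag}+\dim X_{P,\ell}$, with $\dim F_{P^\dag}$ even, makes the shifts and the choice of middle perversity $p$ consistent on the two sides, so $\pi_*\P_{p,X_P}(\EE)$ and the right-hand side agree over $F_{P^\dag}$.

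For the second step, fix a saturated $Q\subsetneq P^\dag$ and $y'\in F_Q$. The fibre $\pi^{-1}(y')\cap\Xhat_P$ is a compact stratified space with strata $X_{P',\ell}$ indexed by the parabolics $P'\subseteq P$ whose saturation is $Q$, with the combinatorics of the corresponding poset of parabolics. One must bound the hypercohomology of this fibre, with coefficients the restriction of $\P_{p,X_P}(\EE)$, so as to conclude that $H^\cdot(i_{F_Q}^*\pi_*\P_{p,X_P}(\EE))$ lies in degrees $\le p(F_Q)$ and $H^\cdot(i_{F_Q}^!\pi_*\P_{p,X_P}(\EE))$ in degrees $\ge p(F_Q)$, with the extreme pieces exactly accounted for by the intersection-cohomology extension from $F_{P^\dag}$. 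This is the kind of vanishing that the $\mathcal{L}$-module formalism of \cite{refnSaperLModules} reduces to Kostant's theorem on $\nn$-cohomology and the ensuing weight estimates; the argument is inductive on the $\QQ$-rank, the difference from the case $P=G$ being that one must run the induction with input $\P_{p,X_{P,\ell}}(\EE)$ and its own boundary behaviour on $\Xhat_{P,\ell}$ rather than $\EE$. Granting the bounds, $\pi_*\P_{p,X_P}(\EE)$ is a direct sum of shifted perverse sheaves, none of them supported on a proper subvariety of $F^*_{P^\dag}$, whose restrictions to $F_{P^\dag}$ are the shifted local systems of the first step; since the intermediate extension is additive, this yields $\pi_*\P_{p,X_P}(\EE)=\bigoplus_i\P_{F_{P^\dag}}(\HH^i(\Xhat_{P,\ell};\P_{p,X_{P,\ell}}(\EE)))[-i]$.

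The main obstacle is the second step. Because $\Xhat$ is not algebraic there is no decomposition theorem to cite, so the semisimplicity of the pushforward and the absence of spurious summands have to be forced out of the representation theory by hand; in particular one has to verify that the $\mathcal{L}$-module, or essential micro-support, constructions of \cite{refnSaperLModules} --- designed around the single map $X_G\to F_G$ --- are compatible with the fibrations $X_P\to F_{P^\dag}$ and with restriction to the fibre compactifications $\Xhat_{P,\ell}$, and that the two middle perversities $p_-$ and $p_+$ are handled uniformly despite the odd-dimensional strata of $\Xhat$.
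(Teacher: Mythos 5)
Your outline defers the entire content of the theorem to its hardest point and then assumes it. The ``second step'' --- the support and cosupport estimates for $\pi_*\P_{p,X_P}(\EE)$ along the deeper strata $F_Q\subsetneq F^*_{P^\dag}$ --- is exactly what has to be proved, and your text explicitly grants it (``Granting the bounds\dots'') while acknowledging that the $\mathcal L$-module/micro-support machinery of \cite{refnSaperLModules} would first have to be rebuilt relative to an arbitrary $P$ and shown compatible with the fibrations $X_P\to F_{P^\dag}$. That is a research program, not a proof. There are also soft spots in your first step: formality of a bounded complex of vector spaces does not by itself split $\pi_*\P_{p,X_P}(\EE)|_{F_{P^\dag}}$ into shifted local systems compatibly with monodromy; you need the product (or at least locally trivial) structure to degenerate the relevant Leray spectral sequence.

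The paper's proof sidesteps all of this. Since $\pi|_{X_P}\colon X_P\to F_{P^\dag}$ is a flat bundle trivialized over a finite cover, one passes to that cover and arranges $X_P=X_{P,\ell}\times F_{P^\dag}$, hence $\Xhat_P=\Xhat_{P,\ell}\times\widehat F_{P^\dag}$ with $\pi|_{\Xhat_P}$ literally the projection onto the second factor. The K\"unneth formula of Cohen, Goresky, and Ji \cite{refnCohenGoreskyJiKuenneth} (checked to apply for the middle perversities) factors the simple perverse sheaf as an exterior product of the corresponding simple perverse sheaves on $\Xhat_{P,\ell}$ and $\widehat F_{P^\dag}$; pushing forward along the projection converts the fibre factor into its hypercohomology $\HH^\cdot(\Xhat_{P,\ell};\P_{p,X_{P,\ell}}(\EE))$, and the base factor $\widehat F_{P^\dag}\to F^*_{P^\dag}$ is handled by the already-proved Rapoport conjecture \eqref{eqnRapoportConj} applied to the Hermitian group $L_{P^\dag,h}$ (valid for either middle perversity). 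No new vanishing estimates at deeper strata, and no relative version of the micro-support theory, are needed. If you want to salvage your approach, the trivializing finite cover and the K\"unneth decomposition are the missing ideas that turn your ``second step'' from an open problem into a citation.
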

Note that while the left-hand side a priori depends on the choice of middle
perversity, the right-hand side does not since $X^*$ has only
even-dimensional strata.  Also the theorem makes it clear that $\pi_*$ is not
injective on objects since the map $P \mapsto P^\dag$ is not injective.

Here is a sketch of the proof.  It suffices to
pass to a finite cover so one can arrange that $X_P = X_{P,\ell} \times
F_{P^\dag}$ whence $\Xhat_P = \Xhat_{P,\ell} \times \widehat F_{P^\dag}$,
with $\pi|_{\Xhat_P}$ being projection on the second factor.  Now apply the
K\"unneth formula of Cohen, Goresky, and Ji
\cite{refnCohenGoreskyJiKuenneth} (which one checks is applicable for
middle perversities) and apply Rapoport's conjecture to the second factor.

\subsection{Summary of this paper}
The theorem above shows that it is reasonable to study the category of
reductively constructible perverse sheaves on the Baily-Borel
compactification $X^*$ by studying the category of perverse sheaves on the
reductive Borel-Serre compactification $\Xhat$.  By the above version of
the decomposition theorem, we understand $\pi_*$ on simple objects; the
goal of this very modest paper is to begin to understand extensions better
in this context.  After recalling the basics of $t$\n-structures in
\S\ref{sectTstructures}, we carefully define middle perversity perverse
sheaves on a stratified pseudomanifold in \S\ref{sectPerverse}, playing
special attention to the issues that arise due to odd dimensional strata.
We also note certain non-trivial extensions that arise due to odd
codimension strata.  In \S\ref{sectExtensions} we indicate how one may
calculate extensions between two simple perverse sheaves.  After discussing
needed background on the reductive Borel-Serre compactification in
\S\ref{sectRBS} and the link cohomology of simple perverse sheaves in
\S\ref{sectLinkCohomology}, we conclude in \S\ref{sectComputations} by
doing the exercise of calculating all extensions between simple perverse
sheaves for $\widehat{\mathscr A_2}$, the reductive Borel-Serre
compactification of the moduli of principally polarized abelian surfaces.

\subsection{A potential application: perverse cohomology}
One reason to better understand extensions is to be able to calculate the
\emph{perverse cohomology} of an object $\Sh$ in $\Derived_{rc}^b(\Xhat)$
(or $\pi_* \Sh$ in  $\Derived_{rc}^b(X^*)$), 
the reductively constructible bounded derived category of sheaves.  Recall
that the definition of a perverse sheaf started with sheaves as the basic
objects, proceeded to the derived category, and within it found the abelian
category $\Perv(Y)$ of perverse sheaves.  Thus a perverse sheaf is
represented as a complex of ordinary sheaves.  However it has become clear
that it is also useful to view perverse sheaves themselves as the basic
objects and in some cases the roles of the two types of objects can be
reversed.  Beilinson \cite{refnBeilinsonDerivedCategory} shows that if $Y$
is an algebraic variety and we consider all algebraic stratifications (not
a fixed one), then the bounded derived category of $\Perv(Y)$ is equivalent
to $\Derc^b(Y)$ within which one finds the category of ordinary
constructible sheaves.  Thus in this setting, a constructible sheaf, or in
fact any object $\Sh$ of $\Derc^b(Y)$, can be represented as a complex of
perverse sheaves $\dots \to \P^{i-1} \to \P^i \to \P^{i+1} \to \dots$.  The
cohomology of this complex $H^i(\P^\cdot)$ is a perverse sheaf called the
perverse cohomology $\p H^i(\Sh)$.

Even when Beilinson's theorem does not apply, as it does not for $\Xhat$
since we have fixed a stratification, the perverse cohomology can be
defined as $\p H^i(\Sh) = \p\tau_{\le0} \p\tau_{\ge0} \Sh[i]$.
Furthermore, just like for the ordinary cohomology sheaves of a complex of
sheaves, there is a spectral sequence with $E_2^{ij} = H^j(Y;\p H^i(\Sh))$
that abuts to $H^{i+j}(Y; \Sh)$.  Many important invariants of $X$ are
realized as the cohomology of complexes of sheaves on $\Xhat$, for example,
the cohomology of the arithmetic group is $H^\cdot(\Gamma; E) =
H^\cdot(\Xhat; i_{X*}\EE)$.  Thus calculating $\p H^i(\Sh)$, or at least
having bounds on the degrees in which $H^\cdot(\Xhat;\p H^i(\Sh))$ can be
nonzero, is important.

One potential approach to calculating, or at least approximating, the perverse
cohomology of $\Sh$ is through its \emph{micro-support}.  In
\cite{refnSaperLModules}*{\S7}, we defined the micro-support $\mS(\Sh)$ and
proved a vanishing theorem \cite{refnSaperLModules}*{\S10} for
$H^\cdot(\Xhat;\Sh)$ based on $\mS(\Sh)$.  As an application, we were able
to prove \cite{refnSaperIHP}, for example, that in the Hermitian case,
$H^i(\Gamma;\EE)=0$ for $i < (1/2)\dim X$ provided $E$ had regular highest
weight; this result was independently proved by Li and Schwermer
\cite{refnLiSchwermer} by different methods.

We also determined \cite{refnSaperLModules}*{\S17} the micro-support of a
simple perverse sheaf $\P_{X_P}(\EE)$ provided the $\QQ$\n-root system of
$L_P$ did not have a factor of type $D_n$, $F_4$, or $E_n$ (a restriction
that should be removable).  The result shows that if a perverse sheaf is
semi-simple and satisfies a certain conjugate self-contragredient
assumption on the coefficients, the micro-support determines the perverse
sheaf.  In fact we will show elsewhere that under certain conditions, even
for a not necessarily semi-simple perverse sheaf, the micro-support
determines the simple constituents and thus potentially the micro-support
can be used as a tool to calculate the perverse cohomology.

\subsection{Acknowledgments}
I would like to thank Amnon Neeman for helpful discussions regarding
extensions.  I would also like to thank Lizhen Ji, Steve Zucker, and an
expert anonymous referee for thoughtful comments concerning this paper.
Needless to say, any failings of this paper are solely due to myself.

It was a great honor to be invited to speak at the birthday conference for
Steve Zucker and to contribute to this volume.  Steve has always been a
source of inspiration and encouragement to me.  When I was a graduate
student and he was a visitor at the Institute, I remember him sitting down
with me in the common room and carefully showing me the derivation of the
Poincar\'e punctured disk metric.  In later encounters he always took a
deep interest in my work which I appreciated.  I also enjoyed long evenings
with him spent listening to classical music as well as many delicious meals
(particularly Maryland crabs whacked open with a wooden mallet!).

\begin{notation}
  Morphisms in a category $\Cat$ will be denoted $\Mor_{\Cat}(A,B)$; when
  $\Cat$ is a category of representations of a group $G$, we simply write
  $\Mor_{G}(A,B)$.

 If $A$ is an object of some derived category of sheaves on a space $Y$ and
 $V\subset Y$ is an open subset, we will sometimes abuse notation and also
 denote by $A$ the  inverse image of $A$ to the derived category of
 sheaves on $V$, that is $A|_V$.

 If $S \subset Y$, we let $i_S\colon S \hookrightarrow Y$ denote the
 inclusion; we use the same notation to denote the inclusion into any
 subset of $Y$ containing $S$.  If $V$ is an open subset of $Y$ containing
 $S$ for which $S$ is closed in $V$, we let $j_S\colon U=V\setminus S
 \hookrightarrow V$.  We thus have inclusions of complementary open and
 closed subsets
\begin{equation}
\xymatrix{
{U} \ar@{^{ (}->}[r]^-{j_S} & {V} & {S\ .} \ar@{_{ (}->}[l]_-{i_S} 
}
\end{equation}
Often $S$ is a stratum of a stratification of $Y$ and $V$ is an open union
of strata for which $S$ is a minimal stratum.

If $S$, $S' \subset Y$, we write $S\preccurlyeq S'$ if $S \subseteq
\overline{S'}$ and $S\prec S'$ if $S \subsetneq \overline{S'}$.

% open
%star neighborhood of $S$ consisting of $S$ and all larger strata, and
%$U=U_S = V_S\setminus S$ is the deleted star neighborhood of $S$.

\end{notation}

\section{$t$-structures}
\label{sectTstructures}
We briefly recall the theory of \t-structures on a  triangulated category
$\Derived$ following \cite{refnBeilinsonBernsteinDeligne}*{\S1.3}.

A \emph{\t-structure} on $\Derived$ consists of two full subcategories
$\Derived^{\le0}$ and $\Derived^{\ge0}$, closed under isomorphism,
satisfying the following 
conditions.  First set $\Derived^{\le k}= \Derived^{\le0}[-k]$ and
$\Derived^{\ge k}= \Derived^{\ge0}[-k]$.  We require
\begin{equation}\label{eqnTstructureOne}
  \text{$\Mor_{\Derived}(A,B)=0$ for $A\in \Derived^{\le 0}$, $B\in
    \Derived^{\ge 1}$}.
\end{equation}
Secondly we require that
\begin{equation}
\Derived^{\le0} \subseteq \Derived^{\le 1} \qquad \text{and}\qquad
\Derived^{\ge 0} \supseteq \Derived^{\ge 1}\ .
\end{equation}
And finally we assume that for any object $X\in \Derived$, there exists a
distinguished triangle
\begin{equation}
A\to X\to B \xrightarrow{[1]} \qquad  \text{($A\in
\Derived^{\le0}$, $B\in \Derived^{\ge 1}$).}
\end{equation}

In fact the distinguished triangle above is unique up to unique isomorphism
and one can use it to define truncation functors $\tau_{\le0}X = A$ and
$\tau_{\ge1} X = B$.  By shifting we obtain truncation functors $\tau_{\le
  k}\colon \Derived \to \Derived^{\le k}$ and $\tau_{\ge k}\colon \Derived
\to \Derived^{\ge k}$ for all $k\in \ZZ$.  There are natural morphisms
$\tau_{\le k} \to \id$ and $\id \to \tau_{\ge k}$, which induce adjoint
relations
\begin{align}
  \label{eqnRightTauAdjoint}
  \Mor_{\Derived}(A,B) &\cong \Mor_{\Derived^{\le k}}(A,\tau_{\le k}B) &&
  \text{($A\in \Derived^{\le k}$, $B\in \Derived$)} \\
  \label{eqnLeftTauAdjoint}
  \Mor_{\Derived}(A,B) &\cong \Mor_{\Derived^{\ge k}}(\tau_{\ge k}A,B) &&
\text{($A\in \Derived$, $B\in \Derived^{\ge k}$)}.
\end{align}

The \emph{heart} of a t-structure is $\Cat = \Derived^{\le0}\cap
\Derived^{\ge0}$.  It is a full abelian subcategory of $\Derived$.
Furthermore, a short exact sequence $0\to C' \to C \to C''\to 0$ in $\Cat$
corresponds to a distinguished triangle $C'\to C \to C'' \xrightarrow{[1]}$
in $\Derived$ with all objects belonging to $\Cat$ and vice-versa.  Thus
$\Ext^1_{\Cat}(C'',C') = \Mor_{\Derived}(C'',C'[1])$.

The functor $H^0\colon \Derived \to \Cat$ given by
$H^0(A)=\tau_{\ge0}\tau_{\le0} A = \tau_{\le0}\tau_{\ge0}A$ is a
cohomological functor.  We set $H^k(A) = H^0(A[k])$

\begin{lem}\label{lemMorphismsWithOneOverlap}
If $A\in \Derived^{\le k}$ and $B\in\Derived^{\ge k}$, then
$\Mor_\Derived(A,B) \cong \Mor_\Cat(H^k(A), H^k(B))$.
\end{lem}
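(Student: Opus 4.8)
The plan is to reduce to the case $k=0$ and then apply the two adjunction isomorphisms \eqref{eqnRightTauAdjoint} and \eqref{eqnLeftTauAdjoint} in succession. Passing from $A,B$ to $A[k],B[k]$ costs nothing, since $[k]$ is an automorphism of $\Derived$ carrying $\Derived^{\le k}$ and $\Derived^{\ge k}$ to $\Derived^{\le 0}$ and $\Derived^{\ge 0}$, identifying $H^k(-)$ with $H^0((-)[k])$, and preserving $\Mor_\Derived$; so I may assume $A\in\Derived^{\le 0}$ and $B\in\Derived^{\ge 0}$. Then $\tau_{\le 0}A=A$ and $\tau_{\ge 0}B=B$, hence $H^0(A)=\tau_{\ge 0}A$ and $H^0(B)=\tau_{\le 0}B$, and the canonical morphisms $A\to\tau_{\ge 0}A$ and $\tau_{\le 0}B\to B$ produce a natural map $\Mor_\Cat(H^0(A),H^0(B))\to\Mor_\Derived(A,B)$ sending $f$ to the composite $A\to H^0(A)\xrightarrow{f}H^0(B)\to B$; the goal is to show it is bijective.

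The computation itself is then a two-step chase. Since $B\in\Derived^{\ge 0}$, \eqref{eqnLeftTauAdjoint} with $k=0$ gives $\Mor_\Derived(A,B)\cong\Mor_\Derived(\tau_{\ge 0}A,B)=\Mor_\Derived(H^0(A),B)$, using that $\Derived^{\ge 0}$ is a full subcategory of $\Derived$. Since $H^0(A)\in\Cat\subseteq\Derived^{\le 0}$, \eqref{eqnRightTauAdjoint} with $k=0$ gives $\Mor_\Derived(H^0(A),B)\cong\Mor_\Derived(H^0(A),\tau_{\le 0}B)=\Mor_\Derived(H^0(A),H^0(B))$, using fullness of $\Derived^{\le 0}$. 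Finally $\Cat$ is full in $\Derived$, so $\Mor_\Derived(H^0(A),H^0(B))=\Mor_\Cat(H^0(A),H^0(B))$. Composing the three identifications yields the isomorphism claimed in the lemma.

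I do not expect a genuine obstacle: the statement is a formal consequence of the \t-structure axioms, and the only step that needs attention is bookkeeping---keeping straight which adjunction applies at which stage, and checking that the composite isomorphism agrees with the explicit natural map described above rather than differing from it by an automorphism. The latter holds because each of \eqref{eqnRightTauAdjoint} and \eqref{eqnLeftTauAdjoint} is realized by composition with one of the canonical morphisms $\id\to\tau_{\ge 0}$ or $\tau_{\le 0}\to\id$, so unwinding the two steps shows the composite sends $f$ to $A\to H^0(A)\xrightarrow{f}H^0(B)\to B$. For the lemma as literally stated---the mere existence of an isomorphism---even this last check is superfluous.
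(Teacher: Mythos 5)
Your proposal is correct and follows essentially the same route as the paper's proof: both apply the two truncation adjunctions in succession (first replacing $A$ by $H^k(A)$ up to shift, then $B$ by $H^k(B)$ up to shift) and conclude by fullness of the heart. Your explicit reduction to $k=0$ and the check that the composite is the natural map are harmless elaborations of what the paper does with general $k$ and the shift $[-k]$.
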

\begin{proof}
\begin{align*}
  \Mor_\Derived(A,B) &= \Mor_\Derived(\tau_{\le k}A,B)
  &&\text{($A=\tau_{\le k}A$ since $A\in \Derived^{\le k}A$)} \\
   &= \Mor_{\Derived}(\tau_{\ge k}\tau_{\le k} A, B) &&\text{(by
    \eqref{eqnLeftTauAdjoint} since $B\in\Derived^{\ge k}$)} \\
  &= \Mor_{\Derived}(H^k(A)[-k],B)\ .
\end{align*}
Similarly one may replace $B$ here by $H^k(B)[-k]$ whence the
lemma since $\Cat$ is full.
\end{proof}

In this paper, $\Derived$ will be the constructible bounded derived category of
sheaves on a stratified pseudomanifold $Y$.  The \emph{standard \t-structure} has $\Derived^{\le0}$
consisting of objects $\Sh$ satisfying
\begin{equation}
H^k(\Sh) = 0 \text{ for $k>0$}\ ,
\end{equation}
and $\Derived^{\le0}$ consisting of objects $\Sh$ satisfying 
\begin{equation}
H^k(\Sh) = 0 \text{ for $k<0$}\ .
\end{equation}
In this case, the truncation functors are the usual truncations of a
complex, and objects in the heart may be represented by constructible
sheaves viewed as complexes with only one nonzero term in degree $0$.

In \S\ref{sectPerverse} we will define the perverse \t-structure whose
heart is the category of perverse sheaves.  The subcategories, truncation
functors, and cohomology functors for this \t-structure will always be
distinguished by a left superscript $p$, as in $\p \Derived^{\le0}$.

\section{Perverse sheaves}
\label{sectPerverse}
\subsection{Definition}
Let $Y$ be a stratified topological pseudomanifold of dimension $n$ with
stratification $\X=\{S\}$.  We will assume that all
strata of $Y$ are connected which implies that the frontier condition
holds: the closure of any stratum is a union of strata.
%
% good reference?  Certainly Greg Friedman's book but more historical?
%
We also assume that there are finitely many strata.  The strata of $Y$
are partially ordered by $S\preccurlyeq T$ if and only if $S \subseteq
\overline T$.

We now briefly recall the definition of the category of perverse sheaves on
$Y$ following \cite{refnBeilinsonBernsteinDeligne}*{\S2.1}.

Fix a $\ZZ$-valued function $p$ on the set of strata, the
\emph{perversity} (in the sense of \cite{refnBeilinsonBernsteinDeligne});
we assume the perversity satisfies $p(S)\ge p(T)$ when $S\preccurlyeq T$.
Our main interest is when $p$ is one of the two \emph{middle
perversities}:
\begin{equation}
  \label{eqnMiddlePerversities}
  p_-(S) = -\left\lfloor\frac{\dim S+1}{2}\right\rfloor\ , \qquad
  p_+(S) = -\left\lfloor\frac{\dim S}{2}\right\rfloor\ .
\end{equation}
If $Y$ has only even dimensional strata, both $p_-$ and $p_+$ are
equal to the self-dual perversity $p_{1/2}(S)=-(1/2)\dim S$.  (A
non-middle perversity is introduced in
\S\ref{ssectLinkCohomologyGeneralDepth} for technical reasons.)

Let $\Derc^b(Y)$ denote the bounded derived category of $\CC$\n-sheaves on
$Y$, constructible with respect to $\X$.  (For an algebraic variety one
usually does not fix a stratification, but since our main focus here is
$Y=\Xhat$ it seems appropriate.)  An object in $\Derc^b(Y)$ may be
represented by a complex of sheaves $\Sh$ on $Y$ whose local cohomology
sheaves $H(i_S^*\Sh)$ along a stratum $S$ are locally constant and finitely
generated; such a locally constant sheaf may be equivalently viewed as a
$\pi_1(S)$\n-module, once we have picked as base point in $S$ (omitted in
the notation).  Since $Y$ is a stratified pseudomanifold, it can be shown
that $H(i_S^!\Sh)$ is likewise locally constant and finitely generated
\cite{refnBorelIntersectionCohomology}*{V,\S3}.

Define the full subcategory $\p
\Derc^{\le0}(Y)$ of $\Derc^b(Y)$ to consist of objects $\Sh$ which satisfy
\begin{equation}\label{eqnPerverseVanishing}
  \text{$H^k(\i_S^*\Sh) = 0 $ for $k> p(S)$, for all strata $S$}\ ;
\end{equation}
Likewise define $\p \Derc^{\ge0}(Y)$ to consist of objects $\Sh$ which satisfy
\begin{equation}\label{eqnPerverseCovanishing}
  \text{$H^k(\i_S^!\Sh) = 0 $ for $k< p(S)$, for all strata $S$}\ .
\end{equation}
We refer to \eqref{eqnPerverseVanishing} and \eqref{eqnPerverseCovanishing}
as the \emph{perverse vanishing} condition and the \emph{perverse
  covanishing} condition respectively.  As usual, we let $\p \Derc^{\le
  k}(Y) = \p \Derc^{\le 0}(Y)[-k]$ and $\p \Derc^{\ge k}(Y) = \p \Derc^{\ge
  0}(Y)[-k]$.

The pair $(\p \Derc^{\le0}(Y), \p \Derc^{\ge0}(Y) )$ forms a \t-structure;
its heart, $\p\Derc^{\le0}(Y) \cap \p\Derc^{\ge0}(Y)$, is the category of
\emph{$p$\n-perverse sheaves} on $Y$, denoted $\Perv(Y)=\Perv_p(Y)$.  For
$p=p_-$ (resp.\ $p_+$) we simply write $\Perv_-(Y)$ (resp.\ $\Perv_+(Y)$).

For $\Sh\in \Derc^b(Y)$, $D_S(\i_S^!\Sh) = \i_S^*D_Y(\Sh)$ where $D_Y$ and
$D_S$ denote the respective Verdier duality involutions.  Set
\[
p^*(S)=-p(S)-\dim S \ ,
\]
the \emph{dual} perversity.  Then $D_Y$ sends $p$\n-perverse sheaves to
$p^*$\n-perverse sheaves.  From \eqref{eqnMiddlePerversities} we see that
$p_-(S) + p_+(S) = -\dim S$ and thus $p_-^*=p_+$ and $p_+^*=p_-$.  In
particular, $D_Y$ interchanges $\Perv_-(Y)$ and $\Perv_+(Y)$.

\begin{warning}
In \S\ref{ssectReductiveConstructible}, after introducing the reductive
Borel-Serre compactification, we will impose a condition we call
\emph{reductively constructible} on our objects which is more appropriate
to use on $\Xhat$.  All of the material of the current section as well as
\S\ref{sectExtensions} hold without change under the assumption of
reductively constructible (with a minor exception noted later).
\end{warning}

\subsection{Simple perverse sheaves}
For every stratum $T$ of $Y$ and every irreducible local system $\EE$ on
$T$ (corresponding to an irreducible representation $E$ of $\pi_1(T)$)
there is a simple object
\begin{equation}\label{eqnSimpleObject}
\P_T(\EE) = \P_{p,T}(\EE) =  \tau_{\le p(S_1)-1}j_{S_1*} \dots
\tau_{\le p(S_N)-1}j_{S_N*}(i_{T*}\EE[-p(T)])
\end{equation}
of $\Perv(Y)$ and these are all the simple objects.  Here $S_1,\dots,S_N$
is an enumeration of the strata $S\prec T$ such that if $S_i \prec S_j$
then $i<j$.

$\P_T(\EE)$ is supported on $\overline T$ and satisfies the
usual perverse vanishing and covanishing condition on $T$:
\[
\text{$H^k(\i_T^*\P_T(\EE)) = 0$ for $k>p(T)$ and $H^k(\i_T^!\P_T(\EE))
  = 0 $ for $k< p(T)$}\ .
\]
In fact, $\i_T^*\P_T(\EE) = \i_T^!\P_T(\EE) = \EE[-p(T)]$.  However
$\P_T(\EE)$ satisfies stronger conditions on strata smaller than $T$:
\begin{equation}\label{eqnICVanishing}
  \text{$H^k(\i_S^*\P_T(\EE)) = 0 $ for $k\ge p(S)$, for all strata $S\prec T$}
\end{equation}
and
\begin{equation}\label{eqnICCovanishing}
  \text{$H^k(\i_S^!\P_T(\EE)) = 0 $ for $k\le p(S)$, for all strata
    $S\prec T$}\ .
\end{equation}
We call \eqref{eqnICVanishing} and  \eqref{eqnICCovanishing} the
\emph{intersection cohomology vanishing} condition and the 
\emph{intersection cohomology  covanishing} condition respectively.

Let $S$ be a stratum of $Y$, $V_S$ an open union of strata containing $S$
as a minimal stratum, and $j_S\colon U_S = V_S\setminus S \hookrightarrow
V_S$.  Define the \emph{link cohomology functor} $\lk_S\colon \Derc^b(U_S)
\to \Derc^b(S)$ by
\[
\lk_S A =  i_S^* j_{S*} A\ .
\]
The local cohomology of $\lk_S A$ is the cohomology of $A$ pulled back to
the topological link of the stratum $S$.  If $A$ is defined on a subset
larger than $U_S$, for example $Y$, we write $\lk_S A$ for $\lk_S(A|_{U_S})$.  Thus
it fits into a distinguished triangle for $A \in  \Derc^b(Y)$:
\[
i_S^!A \to i_S^*A \to \lk_S A \xrightarrow{[1]}\ .
\]
Now the values of the cohomology groups appearing in
\eqref{eqnICVanishing} and \eqref{eqnICCovanishing} in the other degrees
can be easily calculated:
\begin{equation}\label{eqnLocalCohomology}
  \text{$H^k(\i_S^*\P_T(\EE)) \cong H^k(\lk_S \P_T(\EE))$ for $k< p(S)$,
    for all strata $S\prec T$}
\end{equation}
and
\begin{equation}\label{eqnLocalCohomologyWithSupports}
  \text{$H^k(\i_S^!\P_T(\EE)) =   H^{k-1}(\lk_S \P_T(\EE))$
    for $k > p(S)$, for all strata $S\prec T$}\ .
\end{equation}
All degrees of the $\P_T(\EE)$-cohomology of the link of $S$ appear, split
between $\i_S^*\P_T(\EE)$ and (with a shift) $\i_S^!\P_T(\EE)$.

\subsection{Comparison with intersection cohomology}\label{ssectComparison}
Assume $p$ is one of the middle perversities $p_-$ or $p_+$.  Define
\begin{equation}\label{eqnShiftedPerversity}
\pbar_T(S) = p(S) - p(T) - 1, \qquad (S\preccurlyeq T)\ .
\end{equation}
Then by pulling out the shift in
\eqref{eqnSimpleObject} we see that
\[
\P_{p,T}(\EE) = \I_{\pbar_T}\C(\overline T,\EE)[-p(T)]
\]
where for $\pbar\colon \X(\overline T) \to \ZZ$ we set
\begin{equation}\label{eqnDelignesSheaf}
 \I_{\pbar}\C(\overline T,\EE) =  \tau_{\le \pbar(S_1)}j_{S_1*} \dots
\tau_{\le \pbar(S_N)}j_{S_N*}i_{T*}\EE \ .
\end{equation}
When $\pbar(S)$ depends only on $k=\codim_T S$, as is the case for
$\pbar_T(S)$ from \eqref{eqnShiftedPerversity}, the object $\I_{\pbar}\C(\overline T,\EE)$ is essentially Deligne's sheaf for
intersection cohomology as in \cite{refnBorelIntersectionCohomology}*{V}.

In fact $\pbar_T(S)$ agrees with one of the two ``classical'' middle
perversities of Goresky and MacPherson \cite{refnGoreskyMacPhersonIHTwo}:
\begin{equation*}
  m(k) = \left\lfloor\frac{k-2}{2}\right\rfloor, \qquad
  n(k) = \left\lfloor\frac{k-1}{2}\right\rfloor.
\end{equation*}
Specifically for $p=p_-$,
\begin{equation*}
  \pbar_T(S) = \begin{cases}
    n(\codim_T S) & \text{if $\dim T$ is odd,} \\
    m(\codim_T S) & \text{if $\dim T$ is even,}
  \end{cases}
\end{equation*}
and for $p=p_+$,
\begin{equation*}
  \pbar_T(S) = \begin{cases}
    m(\codim_T S) & \text{if $\dim T$ is odd,} \\
    n(\codim_T S) & \text{if $\dim T$ is even,}
  \end{cases}
\end{equation*}

Thus $\P_{p,T}(\EE) = \I_{\pbar_T}\C(\overline T,\EE)[-p(T)]$ is either
$\I_m\C(\overline T,\EE)[-p(T)]$ or $\I_n\C(\overline T,\EE)[-p(T)]$
depending on the parity of $\dim T$.

\subsection{A non-trivial extension of perverse sheaves}
Let $p$ be a middle perversity and let $\EE$ be an irreducible local system
on a stratum $T$ of $Y$.   While  $\P_{p,T}(\EE) = \I_{\pbar_T}\C(\overline
T,\EE)[-p(T)]$ is a simple perverse sheaf in $\Perv_p(Y)$, in fact the object
\[
\widetilde \P_{p,T}(\EE) \equiv  \P_{p^*,T}(\EE)[p^*(T)-p(T)] =
\I_{\overline{p^*}_T}\C(\overline T,\EE)[-p(T)]
\]
is also a perverse sheaf in $\Perv_p(Y)$, though not necessarily simple.

To see this, note that the $p$\n-perversity condition on the stratum $T$ is
satisfied due to the shift.  To deal with the other strata, note first that
for a middle perversity
\[
p^*(S) - p(S) \in
\begin{cases}
  \{0,1\} & \text{if $p=p_-$,} \\
  \{0,-1\} & \text{if $p=p_+$,}
\end{cases}
\]
for any stratum $S$; here the value $0$ occurs if and only if $\dim S$ is
even.  Consequently
\begin{equation}
  \label{eqnDiscrepancyDifference}
  -1 \le   (p^*(S)-p(S)) - (p^*(T)-p(T)) \le 1
\end{equation}
for all strata $S$ and $T$.  Now for $S\prec T$, the intersection
cohomology vanishing condition \eqref{eqnICVanishing} with respect to $p^*$
implies that
\[
\text{$H^k(i_S^* \P_{p^*,T}(\EE)[p^*(T)-p(T)]) = 0$ for $k \ge
  p^*(S) - (p^*(T)-p(T)) $.}
\]
This implies the $p$\n-perverse vanishing condition
\eqref{eqnPerverseVanishing} for $k>p(S)$ since
\[
k \ge p(S)+1 \ge p(S) + (p^*(S)-p(S)) - ( p^*(T) -p(T)) =  p^*(S) -
(p^*(T)-p(T))
\]
by \eqref{eqnDiscrepancyDifference}.  The $p$\n-perverse covanishing
condition \eqref{eqnPerverseCovanishing} follows similarly.

The natural morphism $\I_m\C(\overline T,\EE)[-p(T)] \to
\I_n\C(\overline T,\EE)[-p(T)]$ thus induces short exact sequences
\[
0\to \P_{p,T}(\EE) \to \widetilde \P_{p,T}(\EE) \to \P \to 0 \qquad
\text{(if $\pbar_T=m$)}
\]
or
\[
0 \to \P \to \widetilde \P_{p,T}(\EE) \to \P_{p,T}(\EE) \to 0  \qquad
\text{(if $\pbar_T=n$)}
\]
in $\Perv_p(Y)$ and hence extensions in $\Perv_p(Y)$.

If $\P_{p,T}(\EE) \neq \widetilde \P_{p,T}(\EE)$, the resulting extensions
are not trivial.  For if $\widetilde\P_{p,T}(\EE) = \P \oplus
\P_{p,T}(\EE)$ in $\Perv_p(Y)$, then applying $i_S^*$ or $i_S^!$ and
shifting by $p(T)-p^*(T)$, we see that $\P[p(T)-p^*(T)]$ and
$\widetilde\P_{p^*,T}(\EE) = \P_{p,T}(\EE)[p(T)-p^*(T)]$ satisfy the
$p^*$\n-perverse conditions (since $\P_{p^*,T}(\EE)= \widetilde\P_{p,T}(\EE)
[p(T)-p^*(T)]$ does).  This implies that $\P_{p^*,T}(\EE)= \P[p(T)-p^*(T)]
\oplus \widetilde\P_{p^*,T}(\EE)$ in $\Perv_{p^*}(Y)$, contradicting the
fact that $\P_{p^*,T}(\EE)$ is simple in $\Perv_{p^*}(Y)$.

\begin{rems*}
  \hangindent\myleftmargin
  \textup{(i)} The inequality $\P_{p,T}(\EE) \neq
  \widetilde\P_{p,T}(\EE)$ will occur precisely when there is a odd
  codimension stratum $S \prec T$ with nonvanishing middle degree link
  cohomology
  \[
  H^k(\lk_S\P_{p,T}(\EE))\neq 0 \qquad \text{($k=(\codim_T S - 1)/2 +
    p(T) $).}
  \]
  \begin{enumerate}\setcounter{enumi}{1}
  \item
  If $Y=\Xhat$, the reductive Borel-Serre compactification of a locally
  symmetric variety, and $\EE$ is induced from an algebraic representation
  of $L_T$, then decomposition theorem for $\pi\colon \Xhat \to X^*$ (see
  \S\ref{ssectRBS}) implies that this extension always becomes trivial when
  pushed down to the Baily-Borel compactification.  In fact
  $\pi_*\P_{p,X}(\EE) = \pi_* \widetilde \P_{p,X}(\EE)$ by the solution to
  Rapoport's conjecture \cite{refnSaperLModules} since it applies to both
  middle perversities and they agree on $X^*$; more generally the
  theorem shows that $\pi_*\P_{p,X_P}(\EE)$ and  $\pi_* \widetilde
  \P_{p,X_P}(\EE)$ differ only in the coefficient system, but in the
  reductively constructible setting, the category of coefficient systems on
  a stratum is semisimple.
\item
When $Y=\Xhat$ (with $X$ not necessarily Hermitian), the inequality
$\P_{p,T}(\EE) \neq \widetilde\P_{p,T}(\EE)$ has an intimate relation with
the presence of infinite dimensional local $L^2$\n-cohomology on $\Xhat$.
  \end{enumerate}
\end{rems*}

\section{Extensions}\label{sectExtensions}

Fix a perversity $p$ (not necessarily middle).  In this section we indicate
how one may compute the extensions
$\Ext^1_{\Perv(Y)}(\P_T(\EE),\P_{T'}(\EE'))$ between two simple perverse
sheaves corresponding to strata $T$, $T'\in \X(Y)$.  Note that by the
theory of \t-structures,
\[
\Ext^1_{\Perv(Y)}(\P_T(\EE),\P_{T'}(\EE')) =
\Mor_{\Derc^b(Y)}(\P_T(\EE),\P_{T'}(\EE')[1]) \ .
\]

\begin{lem}\label{lemInductiveStep}
  Let $V$ be an open union of strata with $S$ a minimal stratum of $V$; set
  $U=V\setminus S$. For $A$, $B \in \Derc^b(V)$ and $k\in \ZZ$, assume
  $\Mor_{\Derc^b(U)}(j_S^* A,j_S^*B[k-1]) = \Mor_{\Derc^b(U)}(j_S^* A,j_S^*B[k]) =
  0$.  Then $\Mor_{\Derc^b(V)}(A,B[k]) \cong
  \Mor_{\Derc^b(S)}(i_S^*A,i_S^!B[k])$.
\end{lem}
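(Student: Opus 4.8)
The plan is to use the recollement (gluing) triangles attached to the open-closed decomposition $U \xhookrightarrow{j_S} V \hookleftarrow S$ together with the adjunctions relating $i_S^*$, $i_S^!$, $j_S^*$, $j_{S!}$, $j_{S*}$. Recall the two fundamental distinguished triangles for any $C \in \Derc^b(V)$:
\[
j_{S!}j_S^* C \to C \to i_{S*}i_S^* C \xrightarrow{[1]}, \qquad
i_{S*}i_S^! C \to C \to j_{S*}j_S^* C \xrightarrow{[1]}\ .
\]
First I would apply $\Mor_{\Derc^b(V)}(-, B[k])$ to the first triangle, obtaining a long exact sequence in which the terms $\Mor_{\Derc^b(V)}(j_{S!}j_S^* A, B[k-1])$ and $\Mor_{\Derc^b(V)}(j_{S!}j_S^* A, B[k])$ appear. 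By the adjunction $(j_{S!}, j_S^*)$ these are $\Mor_{\Derc^b(U)}(j_S^* A, j_S^* B[k-1])$ and $\Mor_{\Derc^b(U)}(j_S^* A, j_S^* B[k])$, which vanish by hypothesis. Hence the long exact sequence collapses to an isomorphism
\[
\Mor_{\Derc^b(V)}(A, B[k]) \cong \Mor_{\Derc^b(V)}(i_{S*}i_S^* A, B[k])\ .
\]

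Next I would pass to the right-hand side. Using that $i_{S*}$ is fully faithful together with the adjunction $(i_S^*, i_{S*})$ — or, what amounts to the same, observing $\Mor_{\Derc^b(V)}(i_{S*}M, B[k]) \cong \Mor_{\Derc^b(S)}(M, i_S^! B[k])$ via the adjunction $(i_{S*}, i_S^!)$ (note $i_{S*} = i_{S!}$ since $S$ is closed in $V$) — we get
\[
\Mor_{\Derc^b(V)}(i_{S*}i_S^* A, B[k]) \cong \Mor_{\Derc^b(S)}(i_S^* A, i_S^! B[k])\ ,
\]
which is exactly the claimed identification. Concatenating the two displayed isomorphisms finishes the proof.

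The only real point requiring care — the step I expect to be the main (if modest) obstacle — is confirming that the vanishing of the \emph{two} consecutive Hom groups over $U$ is precisely what is needed: one of them kills the map into the cone term that could make the restriction map non-surjective, and the other (the shift-by-$(k-1)$ one, sitting one step earlier in the long exact sequence) kills the obstruction to injectivity. I would write the relevant piece of the long exact sequence explicitly and point to exactly which two terms are being annihilated, so the reader sees that no third term intervenes. Everything else is a formal manipulation of the standard adjunctions recalled above; no constructibility or perversity input is used here, which is why the lemma is stated for arbitrary $A$, $B \in \Derc^b(V)$.
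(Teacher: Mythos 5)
Your proof is correct. The only difference from the paper's argument is which of the two recollement triangles you use: the paper applies the covariant functor $\Mor_{\Derc^b(V)}(A,-)$ to the triangle $i_{S*}i_S^!B\to B\to j_{S*}j_S^*B\xrightarrow{[1]}$, identifies the two outer terms of the resulting long exact sequence with $\Mor_{\Derc^b(U)}(j_S^*A,j_S^*B[k-1])$ and $\Mor_{\Derc^b(U)}(j_S^*A,j_S^*B[k])$ via the $(j_S^*,j_{S*})$ adjunction, and then uses $(i_S^*,i_{S*})$ to land on $\Mor_{\Derc^b(S)}(i_S^*A,i_S^!B[k])$; you instead apply the contravariant functor $\Mor_{\Derc^b(V)}(-,B[k])$ to the dual triangle $j_{S!}j_S^*A\to A\to i_{S*}i_S^*A\xrightarrow{[1]}$ and use the adjunctions $(j_{S!},j_S^*)$ and $(i_{S*},i_S^!)$. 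The two arguments are formal mirror images: they consume exactly the same two vanishing hypotheses and produce the same isomorphism, so neither buys anything over the other; your closing remark about which of the two annihilated terms controls surjectivity and which controls injectivity of the comparison map is exactly the point, and writing out that stretch of the long exact sequence explicitly, as you propose, is all that is needed to complete the write-up.
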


\begin{proof}
The distinguished triangle $i_{S*}i_S^!B\to B \to j_{S*}j_S^* B
\xrightarrow{[1]}$ yields a long exact sequence
\begin{multline*}
\Mor_{\Derc^b(V)}(A,j_{S*}j_S^*B[k-1]) \longrightarrow
\Mor_{\Derc^b(V)}(A,i_{S*}i_S^!B[k])  \longrightarrow \\
\Mor_{\Derc^b(V)}(A,B[k])  \longrightarrow  \Mor_{\Derc^b(V)}(A,j_{S*}j_S^*B[k]) \ .
\end{multline*}
The outer two groups are zero by hypothesis, thus
\[
\Mor_{\Derc^b(V)}(A,B[k]) \cong \Mor_{\Derc^b(V)}(A,i_{S*}i_S^!B[k]) \cong
\Mor_{\Derc^b(S)}(i_S^*A,i_S^!B[k]) \ . \qedhere
\]
\end{proof}

\begin{lem}
  If all maximal strata of $\overline T \cap \overline {T'}$ are strictly
  smaller than $T$ and $T'$, then
  $\Mor_{\Derc^b(Y)}(\P_T(\EE),\P_{T'}(\EE')[k])=0$ for $k\le 1$.
\end{lem}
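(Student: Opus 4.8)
The plan is to reduce the vanishing over $Y$ to a vanishing over a small open neighborhood of the strata of $\overline T\cap\overline{T'}$, and there to use the intersection cohomology vanishing and covanishing conditions \eqref{eqnICVanishing}–\eqref{eqnICCovanishing} satisfied by the simple objects $\P_T(\EE)$ and $\P_{T'}(\EE')$. First I would observe that $\P_T(\EE)$ is supported on $\overline T$ and $\P_{T'}(\EE')$ on $\overline{T'}$, so any morphism $\P_T(\EE)\to\P_{T'}(\EE')[k]$ is governed entirely by the behavior on $\overline T\cap\overline{T'}$; more precisely, letting $W$ be an open union of strata whose closed complement misses $\overline T\cap\overline{T'}$ is not quite what I want—instead I would work stratum by stratum on $Z:=\overline T\cap\overline{T'}$, whose maximal strata $S$ satisfy $S\prec T$ and $S\prec T'$ by hypothesis.

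The key step is an induction on the strata of $Y$ using Lemma~\ref{lemInductiveStep}, peeling off one minimal stratum $S$ at a time from a suitable open union of strata $V$. When $S$ is \emph{not} contained in $Z$, the restrictions $j_S^*\P_T(\EE)$ and $j_S^*\P_{T'}(\EE')$ still have disjoint supports after removing $S$ (or one of them vanishes near $S$), so the hypotheses of Lemma~\ref{lemInductiveStep} hold trivially by an inductive appeal to the statement being proved on the smaller space $U=V\setminus S$; thus the $\Mor$ group over $V$ is computed by $\Mor_{\Derc^b(S)}(i_S^*\P_T(\EE), i_S^!\P_{T'}(\EE')[k])$, and since at least one of $i_S^*\P_T(\EE)$, $i_S^!\P_{T'}(\EE')$ is zero on such an $S$, this vanishes. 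When $S\subseteq Z$ is a maximal stratum of $Z$, so $S\prec T$ and $S\prec T'$, the relevant group is again $\Mor_{\Derc^b(S)}(i_S^*\P_T(\EE), i_S^!\P_{T'}(\EE')[k])$; now I invoke \eqref{eqnICVanishing} for $\P_T(\EE)$, which forces $H^j(i_S^*\P_T(\EE))=0$ for $j\ge p(S)$, and \eqref{eqnICCovanishing} for $\P_{T'}(\EE')$, which forces $H^j(i_S^!\P_{T'}(\EE')[k])=0$ for $j\le p(S)-k$. Since $S$ is a (locally closed) manifold, $\Mor_{\Derc^b(S)}$ of two complexes is controlled by a hypercohomology spectral sequence in which only the overlap of the degree ranges of $i_S^*\P_T(\EE)$ and $i_S^!\P_{T'}(\EE')[k]$ can contribute; that overlap is empty precisely when $p(S)-k\ge p(S)$, i.e. $k\le 0$, and a one-degree-of-slack argument (using that $H^j$ of the first is concentrated in $j\le p(S)-1$, actually) extends this to $k\le 1$.

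Concretely, for the boundary case $k=1$ I would use that $i_S^*\P_T(\EE)$ has cohomology only in degrees $\le p(S)-1$ by \eqref{eqnICVanishing}, while $i_S^!\P_{T'}(\EE')[1]$ has cohomology only in degrees $\ge p(S)-1+1 = p(S)$ by \eqref{eqnICCovanishing} applied to $\P_{T'}(\EE')$ and the shift; these degree ranges are disjoint, so $\Mor_{\Derc^b(S)}(i_S^*\P_T(\EE), i_S^!\P_{T'}(\EE')[1])=0$ (a constructible complex on a manifold with cohomology in degrees $\le a$ admits no nonzero morphism to one with cohomology in degrees $\ge a+1$, by \eqref{eqnTstructureOne} for the standard $t$-structure). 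For $k<1$ the gap only widens. I expect the main obstacle to be organizing the induction correctly: one must choose the open unions $V$ so that Lemma~\ref{lemInductiveStep} genuinely applies at each stage, and one must handle the strata of $Y$ that lie below $Z$ (i.e. $S\prec S'$ for some maximal $S'$ of $Z$) as well as those incomparable to everything in $Z$—but for all of these, since $S\prec T$ and $S\prec T'$ whenever $S\preccurlyeq$ any stratum of $Z$, the same intersection cohomology (co)vanishing estimates apply verbatim, and for strata $S$ not in $\overline T$ or not in $\overline{T'}$ one of the two complexes restricts to zero. Thus the induction goes through and yields $\Mor_{\Derc^b(Y)}(\P_T(\EE),\P_{T'}(\EE')[k])=0$ for all $k\le 1$.
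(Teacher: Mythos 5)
Your argument is correct and is essentially the paper's proof: an induction over open unions of strata via Lemma~\ref{lemInductiveStep}, reducing at each minimal stratum $S$ of $\overline T\cap\overline{T'}$ to $\Mor_{\Derc^b(S)}(i_S^*\P_T(\EE),i_S^!\P_{T'}(\EE')[k])$, which vanishes by \eqref{eqnICVanishing}, \eqref{eqnICCovanishing}, and \eqref{eqnTstructureOne} because the degree ranges $\le p(S)-1$ and $\ge p(S)+1-k$ are disjoint for $k\le 1$. The paper streamlines the bookkeeping by taking as base case $V=Y\setminus(\overline T\cap\overline{T'})$, where the two supports are disjoint, and then adjoining only strata of $\overline T\cap\overline{T'}$, so your separate case analysis for strata outside the intersection never arises.
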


\begin{proof}
  Let $V$ be an open union of strata containing $Y \setminus \overline T
  \cap \overline {T'}$.  We will prove that
\[
\Mor_{\Derc^b(V)}(\P_T(\EE),\P_{T'}(\EE')[k])=0, \qquad k\le1,
\]
  by induction on the number of strata in $V$.  When $V=Y$ this proves the
  lemma.

  If $V = Y \setminus \overline T \cap \overline {T'}$, the supports of
  $\P_T(\EE)$ and $\P_{T'}(\EE')$ are disjoint and
\[
  \Mor_{\Derc^b(V)}(\P_T(\EE),\P_{T'}(\EE')[k])=0, \qquad \text{for all
    $k$.}
\]

  For larger $V$, let $S$ be a minimal stratum in $V\cap(\overline T \cap
  \overline {T'})$ and set $U = V \setminus S$.  By Lemma ~
  \ref{lemInductiveStep} (which applies by the inductive hypothesis)
  \[
  \Mor_{\Derc^b(V)}(\P_T(\EE),\P_{T'}(\EE')[k]) \cong
  \Mor_{\Derc^b(S)}(i_S^*\P_T(\EE),i_S^!\P_{T'}(\EE')[k]) \ .
  \]
  for $k\le 1$.  Since $S$ is dominated by a maximal stratum in $\overline
  T \cap \overline {T'}$ which is thus strictly smaller than $T$ and $T'$,
  the intersection cohomology vanishing and covanishing conditions apply to
  yield $i_S^*\P_T(\EE) \in \Derc^{\le p(S)-1}(S)$ and
  $i_S^!\P_{T'}(\EE')[k] \in \Derc^{\ge p(S)+1-k}(S)$.  Now $p(S)-1 <
  p(S)+1-k$ for $k\le 1$, so
  $\Mor_{\Derc^b(S)}(i_S^*\P_T(\EE),i_S^!\P_{T'}(\EE')[k]) =0$ by
  \eqref{eqnTstructureOne}.
\end{proof}

We conclude that a non-trivial extension can only exist when $T=T'$, $T
\prec T'$, or $T\succ T'$.

\begin{lem}\label{lemStartOfExtension}
Let $V_T = (Y\setminus \overline{T}) \cup T$ and similarly for $T'$.
\begin{enumerate}
  \item\label{itemTEqualRprime} If $T = T'$,
    $\Mor_{\Derc^b(V_T)}(\P_T(\EE),\P_{T}(\EE')[1]) \cong
     \Mor_{\Derc^b(T)}(\EE,\EE'[1])$.
\item\label{itemTLessThanTprime}  If $T \prec T'$,
$\Mor_{\Derc^b(V_T)}(\P_T(\EE),\P_{T'}(\EE')[1]) \cong
  \Mor_{\pi_1(T)}(E,H^{p(T)}(\lk_T \P_{T'}(\EE')))$.
\item\label{itemTGreaterThanTprime} If $T \succ  T'$,
  $\Mor_{\Derc^b(V_{T'})}(\P_T(\EE),\P_{T'}(\EE')[1]) \cong 
  \Mor_{\pi_1(T')}(H^{p(T')-1}(\lk_{T'} \P_T(\EE)),E')$.
\end{enumerate}
\end{lem}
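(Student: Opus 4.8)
The plan is to treat all three cases by the same two-step reduction, differing only in which stratum one localizes at and whether one ends up with an $i_S^*$ or an $i_S^!$. First observe that $V_T=(Y\setminus\overline T)\cup T$ is an open union of strata in which $T$ is a \emph{minimal} stratum, with open complement $U=V_T\setminus T=Y\setminus\overline T$ (and symmetrically for $V_{T'}$). In each of the three cases one of the two simple perverse sheaves is supported on the closure of the \emph{smaller} of $T,T'$, hence restricts to $0$ on the complement of that smaller stratum; this makes the hypothesis of Lemma~\ref{lemInductiveStep} automatic and lets me replace a $\Mor$ in $\Derc^b(V_T)$ (or $\Derc^b(V_{T'})$) by a $\Mor$ in $\Derc^b$ of a single stratum. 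I then evaluate the latter using ingredients already recorded in \S\ref{sectPerverse}: the identities $i_T^*\P_T(\EE)=i_T^!\P_T(\EE)=\EE[-p(T)]$ on a simple perverse sheaf's own stratum; the intersection cohomology (co)vanishing conditions \eqref{eqnICVanishing}--\eqref{eqnICCovanishing}, which are exactly one degree sharper than the plain perverse (co)vanishing; Lemma~\ref{lemMorphismsWithOneOverlap}; and the link-cohomology identities \eqref{eqnLocalCohomology} and \eqref{eqnLocalCohomologyWithSupports}.

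Concretely, in case~\ref{itemTEqualRprime} both $\P_T(\EE)$ and $\P_T(\EE')$ vanish on $U$, so Lemma~\ref{lemInductiveStep} with $k=1$ gives $\Mor_{\Derc^b(V_T)}(\P_T(\EE),\P_T(\EE')[1])\cong\Mor_{\Derc^b(T)}(i_T^*\P_T(\EE),i_T^!\P_T(\EE')[1])$, which equals $\Mor_{\Derc^b(T)}(\EE[-p(T)],\EE'[-p(T)+1])\cong\Mor_{\Derc^b(T)}(\EE,\EE'[1])$ by translation invariance. In case~\ref{itemTLessThanTprime} ($T\prec T'$) one still has $\P_T(\EE)|_U=0$, so the same step yields $\Mor_{\Derc^b(T)}(\EE[-p(T)],i_T^!\P_{T'}(\EE')[1])$; here $\EE[-p(T)]$ sits in degree $p(T)$ while \eqref{eqnICCovanishing} (for the stratum $T\prec T'$) places $i_T^!\P_{T'}(\EE')$ in $\Derc^{\ge p(T)+1}(T)$, hence $i_T^!\P_{T'}(\EE')[1]$ in $\Derc^{\ge p(T)}(T)$, so Lemma~\ref{lemMorphismsWithOneOverlap} with $k=p(T)$ (and the standard $t$-structure on $\Derc^b(T)$, whose heart is the category of $\pi_1(T)$-modules) identifies the group with $\Mor_{\pi_1(T)}(E,H^{p(T)+1}(i_T^!\P_{T'}(\EE')))$, and \eqref{eqnLocalCohomologyWithSupports} rewrites $H^{p(T)+1}(i_T^!\P_{T'}(\EE'))$ as $H^{p(T)}(\lk_T\P_{T'}(\EE'))$. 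Case~\ref{itemTGreaterThanTprime} ($T\succ T'$) is the mirror image: now it is $\P_{T'}(\EE')$ that restricts to $0$ on $Y\setminus\overline{T'}$, so one works inside $V_{T'}$, strips off $T'$, and arrives at $\Mor_{\Derc^b(T')}(i_{T'}^*\P_T(\EE),\EE'[-p(T')+1])$; the second argument is concentrated in degree $p(T')-1$, \eqref{eqnICVanishing} (for $T'\prec T$) puts $i_{T'}^*\P_T(\EE)$ in $\Derc^{\le p(T')-1}(T')$, and Lemma~\ref{lemMorphismsWithOneOverlap} with $k=p(T')-1$ together with \eqref{eqnLocalCohomology} gives $\Mor_{\pi_1(T')}(H^{p(T')-1}(\lk_{T'}\P_T(\EE)),E')$.

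Everything after the first reduction is degree bookkeeping; the one point that needs care is checking that in cases~\ref{itemTLessThanTprime} and~\ref{itemTGreaterThanTprime} the two objects fed to Lemma~\ref{lemMorphismsWithOneOverlap} genuinely lie in $\Derc^{\le k}(\cdot)$ and $\Derc^{\ge k}(\cdot)$ for the \emph{same} $k$, namely $k=p(T)$ resp.\ $k=p(T')-1$. That ``one overlap'' degree is precisely the one appearing in the statement, and it is forced by the single shift $[1]$ combined with the intersection cohomology (co)vanishing being one degree stronger than the plain perverse (co)vanishing: without that extra degree the two objects would sit in $\Derc^{\le k}$ and $\Derc^{\ge k+1}$, giving $0$ by \eqref{eqnTstructureOne} — which is exactly why only $T=T'$, $T\prec T'$, or $T\succ T'$ can contribute a non-trivial extension, as already noted. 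I do not anticipate any real obstacle beyond getting these parities right and keeping straight that in case~\ref{itemTGreaterThanTprime} the argument runs through $V_{T'}$ rather than $V_T$.
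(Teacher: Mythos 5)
Your proposal is correct and follows the paper's proof essentially verbatim: reduce via Lemma~\ref{lemInductiveStep} (whose hypothesis holds because one of the two simple objects restricts to zero off the closure of the smaller stratum), then apply the intersection cohomology (co)vanishing conditions together with Lemma~\ref{lemMorphismsWithOneOverlap}, and finally rewrite via \eqref{eqnLocalCohomology} or \eqref{eqnLocalCohomologyWithSupports}. The only difference is that you spell out cases \ref{itemTEqualRprime} and \ref{itemTGreaterThanTprime} explicitly where the paper says ``follows from Lemma~\ref{lemInductiveStep}'' and ``similar,'' and your degree bookkeeping in both is accurate.
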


\begin{proof}
 The case $T=T'$ follows from Lemma ~\ref{lemInductiveStep}.  For $T\prec
 T'$ we calculate
  \begin{align*}
    \Mor_{\Derc^b(V_T)}(\P_T(\EE),\P_{T'}(\EE')[1]) &\cong
    \Mor_{\Derc^b(T)}(\EE[-p(T)],i_T^!\P_{T'}(\EE')[1]) && \text{(by Lemma
      ~\ref{lemInductiveStep})} \\
%    &\qquad\qquad\cong \Mor_{\Derc^b(T)}(\EE[-p(T)],\tau_{\ge
%      p(T)}(i_T^!\P_{T'}(\EE')[1])) && \text{(by \eqref{eqnICCovanishing})}
%    \\
%    &\qquad\qquad\cong \Mor_{\Derc^b(T)}(\EE[-p(T)],\tau_{\le p(T)}\tau_{\ge
%      p(T)}(i_T^!\P_{T'}(\EE')[1])) && \text{(by \eqref{eqnRightTauAdjoint})} \\
    &\cong \Mor_{\pi_1(T)}(E, H^{p(T)+1}(i_T^!\P_{T'}(\EE'))) &&
    \text{(by \eqref{eqnICCovanishing} and
      Lemma~\ref{lemMorphismsWithOneOverlap})} \\
    &\cong \Mor_{\pi_1(T)}(E, H^{p(T)}(\lk_T \P_{T'}(\EE'))) &&
    \text{by \eqref{eqnLocalCohomologyWithSupports}}
  \end{align*}
  The case where $T\succ T'$ is similar.
\end{proof}

\begin{lem}\label{lemProlongExtension}
  Assume $S\prec T$ and $S\prec T'$.  Let $V$ be an open union of strata
  with $S\subset V$ a minimal stratum and set $U= V\setminus S$.  A
  morphism $\phi_U\in \Mor_{\Derc^b(U)}(j_S^*\P_T(\EE),j_S^*\P_{T'}(\EE')[1])$
  prolongs to a morphism $\phi_V$ over $V$ if and only if
      \begin{equation}\label{eqnTLessThanTprimeExtensionCondition}
      H^{p(S)-1}(\lk_S(\phi_U))=0 \text{ in } \Mor_{\pi_1(S)}(H^{p(S)-1}(\lk_S
      \P_T(\EE)), H^{p(S)}(\lk_S \P_{T'}(\EE'))) \ .
      \end{equation}
If this prolongation exists it is unique.
\end{lem}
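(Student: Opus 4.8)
The plan is to compare morphisms over $V$ with morphisms over $U=V\setminus S$ by means of the attachment triangle of the closed stratum $S$. Put $B=\P_{T'}(\EE')[1]$ and apply the cohomological functor $\Mor_{\Derc^b(V)}(\P_T(\EE),-)$ to the distinguished triangle $i_{S*}i_S^!B\to B\to j_{S*}j_S^*B\xrightarrow{[1]}$. Using the adjunction $(j_S^*,j_{S*})$ with $j_S^*j_{S*}=\id$ to rewrite $\Mor_{\Derc^b(V)}(\P_T(\EE),j_{S*}j_S^*B)$ as $\Mor_{\Derc^b(U)}(j_S^*\P_T(\EE),j_S^*\P_{T'}(\EE')[1])$, and the adjunction $(i_S^*,i_{S*})$ with $i_S^*i_{S*}=\id$ to rewrite the terms involving $i_{S*}i_S^!$, the long exact sequence contains
\begin{multline*}
\Mor_{\Derc^b(S)}(i_S^*\P_T(\EE),i_S^!\P_{T'}(\EE')[1]) \longrightarrow
\Mor_{\Derc^b(V)}(\P_T(\EE),\P_{T'}(\EE')[1]) \longrightarrow \\
\Mor_{\Derc^b(U)}(j_S^*\P_T(\EE),j_S^*\P_{T'}(\EE')[1]) \xrightarrow{\ \delta\ }
\Mor_{\Derc^b(S)}(i_S^*\P_T(\EE),i_S^!\P_{T'}(\EE')[2])\ ,
\end{multline*}
where the second displayed arrow is restriction to $U$ and $\delta$ is post-composition with the connecting morphism of the triangle. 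By exactness $\phi_U$ prolongs to $V$ if and only if $\delta(\phi_U)=0$. Moreover the leftmost group vanishes: the intersection cohomology vanishing and covanishing conditions \eqref{eqnICVanishing} and \eqref{eqnICCovanishing}, valid because $S\prec T$ and $S\prec T'$, place $i_S^*\P_T(\EE)$ in $\Derc^{\le p(S)-1}(S)$ and $i_S^!\P_{T'}(\EE')[1]$ in $\Derc^{\ge p(S)}(S)$, so the group is $0$ by \eqref{eqnTstructureOne}; hence restriction to $U$ is injective and the prolongation, when it exists, is unique. The remaining task is to show that $\delta(\phi_U)=0$ is equivalent to \eqref{eqnTLessThanTprimeExtensionCondition}.

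First I would identify the target of $\delta$ with the group in \eqref{eqnTLessThanTprimeExtensionCondition}. As noted, $i_S^*\P_T(\EE)\in\Derc^{\le p(S)-1}(S)$; and \eqref{eqnICCovanishing} gives $i_S^!\P_{T'}(\EE')\in\Derc^{\ge p(S)+1}(S)$, so $i_S^!\P_{T'}(\EE')[2]\in\Derc^{\ge p(S)-1}(S)$. Lemma~\ref{lemMorphismsWithOneOverlap} therefore identifies $\Mor_{\Derc^b(S)}(i_S^*\P_T(\EE),i_S^!\P_{T'}(\EE')[2])$ with $\Mor_{\pi_1(S)}\big(H^{p(S)-1}(i_S^*\P_T(\EE)),\,H^{p(S)+1}(i_S^!\P_{T'}(\EE'))\big)$, and by \eqref{eqnLocalCohomology} and \eqref{eqnLocalCohomologyWithSupports} the two local systems on $S$ are canonically $H^{p(S)-1}(\lk_S\P_T(\EE))$ and $H^{p(S)}(\lk_S\P_{T'}(\EE'))$, exactly the groups in \eqref{eqnTLessThanTprimeExtensionCondition}.

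The core of the argument, and the step I expect to be the only genuine obstacle, is to check that under these identifications $\delta(\phi_U)$ corresponds to $H^{p(S)-1}(\lk_S(\phi_U))$ up to composition with two isomorphisms. Applying $i_S^*$ to the attachment triangle of $\P_{T'}(\EE')$ yields the link triangle $i_S^!\P_{T'}(\EE')\to i_S^*\P_{T'}(\EE')\to\lk_S\P_{T'}(\EE')\xrightarrow{[1]}$, and chasing the units and counits of the two adjunctions identifies $\delta(\phi_U)$ with the composite
\[
i_S^*\P_T(\EE)\xrightarrow{\ c\ }\lk_S\P_T(\EE)\xrightarrow{\ \lk_S(\phi_U)\ }\lk_S\P_{T'}(\EE')[1]\xrightarrow{\ \beta\ }i_S^!\P_{T'}(\EE')[2]\ ,
\]
where $c\colon i_S^*\P_T(\EE)\to\lk_S\P_T(\EE)$ is the canonical map and $\beta$ is a shift of the connecting morphism $\lk_S\P_{T'}(\EE')\to i_S^!\P_{T'}(\EE')[1]$ of the link triangle. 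Passing to $H^{p(S)-1}$: the map $H^{p(S)-1}(c)$ is an isomorphism by \eqref{eqnLocalCohomology} (here $S\prec T$); and in the long exact sequence of the link triangle for $\P_{T'}(\EE')$ the groups $H^{p(S)}(i_S^*\P_{T'}(\EE'))$ and $H^{p(S)+1}(i_S^*\P_{T'}(\EE'))$ both vanish by \eqref{eqnICVanishing} (here $S\prec T'$), so $\beta$ induces an isomorphism $H^{p(S)}(\lk_S\P_{T'}(\EE'))\xrightarrow{\ \sim\ }H^{p(S)+1}(i_S^!\P_{T'}(\EE'))$. Since $\delta(\phi_U)$ is detected by its effect on $H^{p(S)-1}$ by Lemma~\ref{lemMorphismsWithOneOverlap}, we conclude $\delta(\phi_U)=0$ if and only if $H^{p(S)-1}(\lk_S(\phi_U))=0$, i.e.\ condition \eqref{eqnTLessThanTprimeExtensionCondition}; combined with the first paragraph this proves the lemma. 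The delicate point is precisely this compatibility of $\delta$ with $\lk_S(\phi_U)$ under the adjunctions; it is a careful but routine diagram chase with the attachment triangle and the unit--counit identities, everything else being the bookkeeping of $t$\nobreakdash-structure degrees described above.
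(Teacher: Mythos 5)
Your proposal is correct and follows essentially the same route as the paper: apply $\Mor_{\Derc^b(V)}(\P_T(\EE),-)$ to the attachment triangle of $S$, kill the leftmost term of the long exact sequence by the intersection cohomology (co)vanishing conditions and \eqref{eqnTstructureOne} to get uniqueness, and identify the obstruction group via Lemma~\ref{lemMorphismsWithOneOverlap}, \eqref{eqnLocalCohomology}, and \eqref{eqnLocalCohomologyWithSupports}. The only difference is that you make explicit the compatibility of the connecting map $\delta(\phi_U)$ with $H^{p(S)-1}(\lk_S(\phi_U))$ under these identifications, a step the paper subsumes under ``a similar calculation''; your degree bookkeeping there is accurate.
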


\begin{proof}
  We apply $\Mor_{\Derc^b(V)}(\P_T(\EE),\cdot)$ to the distinguished
  triangle $i_{S*}i_S^!\P_{T'}(\EE') \to \P_{T'}(\EE') \to i_{S*} j_S^*
  \P_{T'}(\EE') \xrightarrow{[1]}$ which yields, after using adjointness,
  the long exact sequence
\begin{multline*}
\Mor_{\Derc^b(S)}(i_S^*\P_T(\EE),i_S^!\P_{T'}(\EE')[1])  \longrightarrow
\Mor_{\Derc^b(V)}(\P_T(\EE),\P_{T'}(\EE')[1])  \longrightarrow \\
\Mor_{\Derc^b(U)}(j_S^*\P_T(\EE),j_S^*\P_{T'}(\EE')[1]) \longrightarrow 
\Mor_{\Derc^b(S)}(i_S^*\P_T(\EE),i_S^!\P_{T'}(\EE')[2])
\ .
\end{multline*}
Since $i_S^*\P_T(\EE) \in \Derc^{\le p(S)-1}(S)$ by \eqref{eqnICVanishing}
and $i_S^!\P_{T'}(\EE')[1] \in \Derc^{\ge p(S)}(S)$ by
\eqref{eqnICCovanishing}, the leftmost term is $0$ by
\eqref{eqnTstructureOne}.  This shows that $\phi_V$ is unique if it exists.
It exists if and only if $\phi_U$ maps to zero in the last term, which by a
similar calculation and using Lemma ~\ref{lemMorphismsWithOneOverlap} is
\[
\Mor_{\pi_1(S)}(H^{p(S)-1}(i_S^*\P_T(\EE)),H^{p(S)-1}(i_S^!\P_{T'}(\EE')[2]))\ .
\]
By \eqref{eqnLocalCohomology} and \eqref{eqnLocalCohomologyWithSupports} this
is the group of morphisms in \eqref{eqnTLessThanTprimeExtensionCondition}.
\end{proof}

Note that the map on link cohomology at $S$ which needs to be zero in order
to prolong an extension is the map from the highest degree
of link cohomology occurring in $i_S^* \P_T(\EE)$ to the lowest degree of
link cohomology that does \emph{not} occur in $i_S^* \P_{T'}(\EE')$.

This section is summarized in the following
\begin{prop}
$\Ext^1_{\Perv(Y)}(\P_T(\EE),\P_{T'}(\EE'))$ is zero unless $T=T'$, $T \prec T'$
or $T\succ T'$.  
\begin{enumerate}
  \item For $T=T'$, $\Ext^1_{\Perv(Y)}(\P_T(\EE),\P_{T}(\EE'))$ is
  isomorphic to a subgroup of
  \[
  \Mor_{\Derc^b(V_T)}(\P_T(\EE),\P_{T}(\EE')[1]) \cong
  \Mor_{\Derc^b(T)}(\EE,\EE'[1]) \ .
\]
\item For $T \prec T'$, $\Ext^1_{\Perv(Y)}(\P_T(\EE),\P_{T'}(\EE'))$ is
  isomorphic to a subgroup of
\[
  \Mor_{\Derc^b(V_T)}(\P_T(\EE),\P_{T'}(\EE')[1]) \cong
  \Mor_{\pi_1(T)}(E,H^{p(T)}(\lk_T \P_{T'}(\EE')))\ .
\]
\item For $T \succ T'$, $\Ext^1_{\Perv(Y)}(\P_T(\EE),\P_{T'}(\EE'))$ is
  isomorphic to a subgroup of
\[
 \Mor_{\Derc^b(V_{T'})}(\P_T(\EE),\P_{T'}(\EE')[1]) \cong 
  \Mor_{\pi_1(T')}(H^{p(T')-1}(\lk_{T'} \P_T(\EE)),E')\ .
\]
\end{enumerate}
In either case, the group of extensions consists of those morphisms $\phi$ that
recursively satisfy 
\[
     H^{p(S)-1}(\lk_S(\phi|_{U_S}))=0 \text{ in } \Mor_{\pi_1(S)}(H^{p(S)-1}(\lk_S
      \P_T(\EE)), H^{p(S)}(\lk_S \P_{T'}(\EE'))) \ .
\]
for all $S$ satisfying $S\prec T$ and $S\prec T'$.  Here $V_S$ is an open
union of strata containing $S$ as a minimal stratum and $U_S = V_S\setminus
S$; the condition above allows one to uniquely extend $\phi$ from $U_S$ to
$V_S$.
\end{prop}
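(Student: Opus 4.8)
The plan is to assemble the proposition from the three preceding lemmas. First I would recall, from the theory of $t$-structures in \S\ref{sectTstructures} (where $\Ext^1_{\Cat}(C'',C') = \Mor_{\Derived}(C'',C'[1])$ in the heart $\Cat$), that
$\Ext^1_{\Perv(Y)}(\P_T(\EE),\P_{T'}(\EE')) = \Mor_{\Derc^b(Y)}(\P_T(\EE),\P_{T'}(\EE')[1])$,
so the whole problem becomes the computation of a morphism group in the derived category. The first assertion — vanishing unless $T=T'$, $T\prec T'$, or $T\succ T'$ — is immediate from the lemma stated above (if none of these three relations holds, every maximal stratum of $\overline T\cap\overline{T'}$ is strictly smaller than both $T$ and $T'$), which then gives $\Mor_{\Derc^b(Y)}(\P_T(\EE),\P_{T'}(\EE')[k])=0$ for all $k\le 1$, in particular for $k=1$.

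For each of the three surviving cases I would argue uniformly. Write $W$ for $V_T$ when $T=T'$ or $T\prec T'$, and for $V_{T'}$ when $T\succ T'$; in all cases $W$ is an open union of strata obtained from $Y$ by deleting a set $\mathcal Z$ of strata, and Lemma~\ref{lemStartOfExtension} identifies $\Mor_{\Derc^b(W)}(\P_T(\EE),\P_{T'}(\EE')[1])$ with the concrete group displayed in the statement. The key point is that $\mathcal Z$ coincides with the index set $\{\,S : S\prec T \text{ and } S\prec T'\,\}$: in the first two cases $\mathcal Z=\{\,S:S\prec T\,\}$ and $S\prec T\preccurlyeq T'$ forces $S\prec T'$, while in the third $\mathcal Z=\{\,S:S\prec T'\,\}$ and $S\prec T'\prec T$ forces $S\prec T$. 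In particular each $S\in\mathcal Z$ satisfies the hypotheses of Lemma~\ref{lemProlongExtension}. I would then present $Y$ as the end of a chain $W=V^{(0)}\subset V^{(1)}\subset\dots\subset V^{(m)}=Y$ of open unions of strata, obtained by adding the elements of $\mathcal Z$ back one at a time in an order refining the \emph{reverse} of $\preccurlyeq$; this ensures that the stratum $S^{(j)}$ added at step $j$ is minimal in $V^{(j)}$, as Lemma~\ref{lemProlongExtension} requires (a stratum $\prec S^{(j)}$ lies in $\mathcal Z$, which is downward closed, hence by the choice of ordering has not yet been added). At step $j$, Lemma~\ref{lemProlongExtension} shows that the restriction map
$\Mor_{\Derc^b(V^{(j)})}(\P_T(\EE),\P_{T'}(\EE')[1])\to\Mor_{\Derc^b(V^{(j-1)})}(\P_T(\EE),\P_{T'}(\EE')[1])$
is injective with image the subgroup carved out by the single condition $H^{p(S^{(j)})-1}(\lk_{S^{(j)}}(\phi|_{U_{S^{(j)}}}))=0$. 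Composing these inclusions down to $W$ identifies $\Mor_{\Derc^b(Y)}(\P_T(\EE),\P_{T'}(\EE')[1])$ with the subgroup of $\Mor_{\Derc^b(W)}(\P_T(\EE),\P_{T'}(\EE')[1])$ cut out by imposing that condition for every $S\in\mathcal Z$; transporting along Lemma~\ref{lemStartOfExtension} yields the three displayed isomorphisms and, since $\mathcal Z=\{\,S:S\prec T,\ S\prec T'\,\}$, exactly the recursive vanishing condition in the statement.

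I expect the one delicate issue — bookkeeping rather than a genuine obstacle — to be the well-posedness and choice-independence of the recursive condition. One must check that, with the reverse-$\preccurlyeq$ ordering, the morphism $\phi$ has genuinely been prolonged over some open neighbourhood $U_S$ of $S$ by the time $S$ is reached (so that $\phi|_{U_S}$, and hence $\lk_S(\phi|_{U_S})$, is defined); and that the resulting subgroup is independent of the chosen chain $V^{(\bullet)}$ and of which admissible $V_S$ is used at each stage. Both follow from the uniqueness clause in Lemma~\ref{lemProlongExtension}: once the vanishing conditions for all strata strictly below $S$ have been imposed, the prolongation across $S$ exists precisely when the condition at $S$ holds, and is then unique — so the subgroup it determines is canonical.
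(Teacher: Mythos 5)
Your proposal is correct and follows essentially the same route as the paper, which presents this Proposition explicitly as a summary of the section and proves it by exactly the assembly you describe: the intersection-of-closures lemma for the vanishing statement, Lemma~\ref{lemStartOfExtension} for the identification over $V_T$ (resp.\ $V_{T'}$), and repeated application of Lemma~\ref{lemProlongExtension} over a chain of open unions of strata obtained by adjoining the common lower strata one at a time. Your observations that the deleted strata are precisely those with $S\prec T$ and $S\prec T'$ and that uniqueness of the prolongation makes the recursive condition well posed are the right (and only) points needing care.
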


\begin{rems*}
    \hangindent\myleftmargin
  \textup{(i)} 
The condition \eqref{eqnTLessThanTprimeExtensionCondition} needed to
prolong an extension can be nontrivial even in the case $T=T'$; one
example of an extension of coefficient systems failing this condition (and
hence not prolonging to an extension of perverse sheaves) is
\cite{refndeCataldoMiglioriniDecompositionTheorem}*{Examples 2.2.5 and
  2.7.1}.  The point is that the intermediate extension function $\EE
\mapsto \P_T(\EE)$ \eqref{eqnSimpleObject} is not exact even though it
preserves injective and surjective maps
\cite{refndeCataldoMiglioriniDecompositionTheorem}*{\S2.7}.
  \begin{enumerate}\setcounter{enumi}{1}
  \item In \S\ref{ssectReductiveConstructible} we will define a more
    refined notion of constructibility in which local systems on a stratum
    $T$ are associated to algebraic representations of a reductive group
    $L_T$.  All the results of this section hold in this context except one
    should replace $\Mor_{\pi_1(T)}$ by $\Mor_{L_T}$ as needed.  In
    addition, since algebraic representations of a reductive group are
    semisimple, there are no extensions between simple objects when $T=T'$.
  \end{enumerate}
\end{rems*}

\section{Reductive Borel-Serre compactification}\label{sectRBS}
The reductive Borel-Serre compactification $\Xhat$ first appeared in work
of Zucker \cite{refnZuckerWarped} and has grown in importance far beyond
its original use.  We recall it following
\cite{refnGoreskyHarderMacPherson}, focusing mainly on its structure rather
than its construction.

\subsection{Stratification}
For $G$ a reductive algebraic group defined over $\QQ$ and $\Gamma$ an
arithmetic subgroup, let $X = X_G = \Gamma \backslash G(\RR) / A_G(\RR)K$
be the corresponding locally symmetric space.  Here $A_G$ is the maximal
$\QQ$\n-split torus in the center of $G$, and $K$ is a maximal compact
subgroup of $G(\RR)$.

For a parabolic $\QQ$\n-subgroup $S$ of $G$, let $L_S = S/N_S$ denote its
reductive Levi quotient, where $N_S$ denotes the unipotent radical of $S$;
it is also defined over $\QQ$.  We have an almost direct product $L_S = M_S
A_S$ where $M_S = \lsp0 L_S = \bigcap_\chi \ker \chi^2$, where $\chi$ runs over
characters of $L_S$ defined over $\QQ$.  Let $\Gamma_{N_S} = \Gamma\cap
N_S$ and $\Gamma_{L_S} = (\Gamma\cap S) / (\Gamma\cap N_S)$ be the induced
arithmetic subgroups.  Starting from $L_S$ and $\Gamma_{L_S}$ we again
obtain a locally symmetric space $X_S$.  If $S'$ is $\Gamma$\n-conjugate to
$S$, then $X_{S'}$ and $X_S$ may be canonically identified.

The reductive Borel-Serre compactification of $X$ is the stratified
pseudomanifold $\Xhat = \coprod_{S} X_S$ (where $S$ ranges over the
$\Gamma$\n-conjugacy classes of parabolic $\QQ$\n-subgroups of $G$) endowed
with an appropriate topology.  The closure of a stratum $X_S$ is denoted
$\Xhat_S$; it is indeed the reductive Borel-Serre compactification of $X_S$
and its strata $X_T$ correspond to $\Gamma$\n-conjugacy classes of
parabolic $\QQ$\n-subgroups having a representative $T\subseteq S$.

From now on we we will abuse notation by using the same letter $S$ to
denote a stratum $X_S$, the corresponding $\Gamma$-conjugacy class of
parabolic $\QQ$\n-subgroups, and a representative of that class.  Thus the
partial order $S \preccurlyeq T$ on strata corresponds on parabolic
subgroups to $S \subseteq \lsp{\gamma} T$ for some $\gamma \in \Gamma$.

The \emph{depth} of a stratum $S$ of $\Xhat$, $\depth S$, is the maximal
length $d$ of a chain $S = S_0 \prec S_1 \prec \dots \prec S_d = X$.  If
$S\preccurlyeq T$, the \emph{relative depth} of $S$ viewed as a stratum of
$\overline T$, $\depth_T S$, is the maximal length of a chain $S = S_0
\prec S_1 \prec \dots \prec S_d = T$.  We have $\depth_T S = \dim A_S -
\dim A_T$.

An algebraic representation $E$ of $L_S$ induces a representation of
$\Gamma_{L_S}$ and hence a local system $\EE$ on $S$.

\subsection{Reductive constructibility}\label{ssectReductiveConstructible}
A constructible complex of sheaves $\Sh$ on a pseudomanifold has the
property that for all strata $S$, the cohomology sheaves of $i_S^* \Sh$,
$i_S^! \Sh$, and $\lk_S \Sh$ are all finitely generated locally constant
and hence are associated to finite-dimensional representations of
$\Gamma_{L_S}$.  Furthermore the maps on cohomology induced by the
distinguished triangle $i_S^!\Sh \to i_S^*\Sh \to \lk_S \Sh
\xrightarrow{[1]}$ are morphisms of $\Gamma_{L_S}$\n-representations.  A
\emph{reductively constructible} complex of sheaves $\Sh$ on $\Xhat$ is as
above but has been enriched with extra structure so that all of these
locally constant sheaves arise from algebraic representations of $L_S$ and
that the morphisms above are morphisms of $L_S$\n-modules.  Morphisms
between such sheaves must also induce morphisms of $L_S$\n-modules on the
cohomology sheaves over a stratum.

Rather than construct the derived category of reductively constructible
sheaves $\Derived_{rc}^b(\Xhat)$ directly as suggested above, we note instead that if one starts
with the category of $\mathscr L$\n-modules constructed in
\cite{refnSaperLModules} and pass to the homotopy category, one obtains the
desired category of reductively constructible sheaves.  (In the homotopy
category of  $\mathscr L$\n-modules, every quasi-isomorphism is already an
isomorphism and thus one does not need to localize further.)  We will
discuss the details elsewhere.

Exactly as in \S\ref{sectPerverse} one can define a perverse \t-structure
on $\Derived_{rc}^b(\Xhat)$ and obtain a category of perverse sheaves
which we again denote $\Perv(\Xhat)$.  The description of simple objects
is the same except we start with $\EE$ on $T$ coming from an algebraic
representation of $L_T$.  The fact that the pushforward functors in the
definition of the simple perverse sheaves do preserve reductive
constructibility is a consequence of the theorems of Nomizu, van Est, and
Kostant which we recall in \S\ref{sectLinkCohomology}.  The results in
\S\ref{sectExtensions} all remain true with identical proofs however one
must replace $\pi_1(S)$\n-morphisms with $L_S$\n-morphisms in Lemmas
~\ref{lemStartOfExtension} and \ref{lemProlongExtension}.

Enriching our objects $\Sh$ so that $H(i_S^*\Sh)$ has an action of the
central split torus $A_S\subset L_S$ is new data.  However aside from this,
passing from the constructible category to the reductively constructible
category is not as major a change as it may appear at first glance due to
the Borel density theorem \cite{refnBorelDensity} and Margulis
superrigidity \cite{refnMargulisDiscreteSubgroups}.

\subsection{Links}
We recall the link of a stratum $S$ of $\Xhat$ following
\cite{refnGoreskyHarderMacPherson} (see also \cite{refnSaperCDM}*{\S7}).

For any parabolic $\QQ$\n-subgroup $S$, the Levi quotient $L_S$ acts (via a
lift to $S$) by conjugation on the Lie algebra $\nn_S$ of $N_S$.  Though
this action depends on the lift, the weights by which $A_S$ acts on $\nn_S$
are well-defined and have a unique basis
denoted $\Delta_S$.  If $S\subseteq S'$, then $N_{S'} \lhd N_S$ and group $A_{S'}$
may be naturally viewed as a subgroup of $A_S$.  We let
$\Delta_S^{S'}\subseteq \Delta_{S'}$ be those weights that restrict trivially to
$A_{S'}$.  The correspondence ${S'} \mapsto \Delta_S^{S'}$ is an order preserving
bijection between parabolic $\QQ$\n-subgroups containing $S$ and subsets of
$\Delta_S$.  Restriction to $A_{S'}$ yields a bijection between
$\Delta_S\setminus \Delta_S^{S'}$ and $\Delta_{S'}$.  Note that $\Delta_S^G =
\Delta_S$.

Let $|\Delta_S|$ be a topological simplex with vertices indexed by the
elements of $\Delta_S$.  Give $|\Delta_S|$ the stratification by its open
faces $|\Delta_S^{S'}|^\circ = \operatorname{int} |\Delta_S^{S'}|$ indexed
by ${S'}\supsetneq S$.  Note that
\[
\dim |\Delta_S| = \depth S \ .
\]

The topological link of the stratum $S\subset \Xhat$ is
\begin{equation}\label{eqnLink}
\Lk_S = \left. \left(\, \Gamma_{N_S}\backslash N_S(\RR) \times |\Delta_S|\,
\right) \right/ \sim
\end{equation}
where if $t\in |\Delta_S^{S'}|^\circ$, $(n,t) \sim (n'n,t)$ for all $n'\in
N_{S'}(\RR)$.  Thus the intersection of the link of $S$ with higher strata
${S'}\supset S$ is
\begin{equation}\label{eqnLinkIntersectStratum}
\Lk_S \cap {S'} = \Gamma_{N_S^{S'}} \backslash N_S^{S'}(\RR) \times
|\Delta_S^{S'}|^\circ\ ,
\end{equation}
where $N_S^{S'} = N_S/N_{S'}$ and $\Gamma_{N_S^{S'}} = \Gamma_{N_S}/\Gamma_{N_{S'}}$.

More generally, the topological link of $S$ viewed as a stratum of
$\overline T$ (for $S\preccurlyeq T$) is
\begin{equation}\label{eqnLinkGeneral}
\Lk_S^T = \left. \left(\, \Gamma_{N_S^T}\backslash N_S^T(\RR) \times
|\Delta_S^T|\, \right) \right/ \sim 
\end{equation}
and
\[
\dim |\Delta_S^T| = \depth_T S \ .
\]
If $S\preccurlyeq {S'} \preccurlyeq T$, we have again
\begin{equation}\label{eqnLinkIntersectStratumGeneral}
\Lk_S^T \cap {S'} = \Lk_S \cap {S'}  \Gamma_{N_S^{S'}} \backslash
N_S^{S'}(\RR) \times |\Delta_S^{S'}|^\circ \ .
\end{equation}

\section{Link cohomology in $\Xhat$}\label{sectLinkCohomology}
To actually calculate extensions we need to understand the link cohomology
functor $\lk_S = i_S^* j_{S*}$ on a stratum $S$.  We calculate the link
cohomology of $\P_T(\EE)$ for strata $S$ of relative depth $1$, $2$, and $3$,
following the formula given in \cite{refnSaperLModules}*{\S5.5} (see also
\cite{refnSaperCDM}*{\S18.4}).

\subsection{Kostant's theorem}
The calculation of $H(\lk_S \P_T(\EE))$ given later will involve the Lie
algebra cohomology $H(\nn_S^T,E)$ where $\nn_S^T$ is the Lie algebra of
$N_S^T$.  The adjoint action of $L_S$ on $\nn_S^T$ (via choice of a lift)
induces an defined action of $L_S$ on $H(\nn_S^T,E)$ which is independent
of the lift.  We begin by recalling Kostant's theorem \cite{refnKostant}
which gives a decomposition of $H(\nn_S^T,E)$ as an $L_S$\n-module.

We first consider $T=G$ and let $E$ be an irreducible algebraic
representation of $G$.  Choose a Cartan subalgebra for the Lie algebra
$\ll_S$ of $L_S$; it lifts to a Cartan subalgebra of the Lie algebra $\gg$
of $G$.  Choose an order on the roots of $\gg$ so that the roots in $\nn_S$
are all positive.  Let $\rho$ be one-half the sum of the positive roots of
$\gg$.  Let $W=W^G$ be the Weyl group of the root system of $\gg$ with
corresponding length function $\l(w)$.  Let $W^S\subseteq W$ be the
subgroup generated by the simple reflections in roots of $\ll_S$.  Let $W_S
\subseteq W$ be the set of unique minimal length representatives of cosets
in $W^S\backslash W$. Thus there is a product decomposition $W = W^SW_S$.
For $E$ an irreducible algebraic representation of $G$ with highest weight
$\lambda$, Kostant's theorem says
\[
 H(\nn_S, E) = \bigoplus_{w\in W_s} H^{\l(w)}(\nn_S,E)_w[-\l(w)]
\]
where $H^{\l(w)}(\nn_S,E)_w$ is an irreducible $L_S$\n-module and has
highest weight $w(\lambda+\rho)-\rho$.

Consider now $H(\nn_S^T, E)$ where $S\prec T$ and $E$ is an
irreducible algebraic representation of $L_T$ with highest weight
$\lambda$.  We replace $G$ above by $L_T$.  Thus let $\rho^T$ be one-half
the sum of the positive roots of $\ll_T$, let  $W^T$ be the Weyl group of
the root system of $\ll_T$.  We have  a decomposition $W^T = W^S W_S^T$,
where $W_S^T$ consists of the minimal length representatives of the cosets
in $W^S\backslash W^T$.  Kostant's theorem here says
\[
 H(\nn_S^T, E) = \bigoplus_{w\in W_s^T} H^{\l(w)}(\nn_S^T,E)_w[-\l(w)]
\]
where $H^{\l(w)}(\nn_S^T,E)_w$ has highest weight
$w(\lambda+\rho^T)-\rho^T$.

\subsection{Relative depth $1$}
Let $\EE$ be a local system on $T$ corresponding to an algebraic
representation $E$ of $L_T$.  By \eqref{eqnLinkIntersectStratumGeneral},
since $\depth_T S = 1$, the link $\Lk^T_S$ is simply the compact
nilmanifold $\Gamma_{N_S^T}\backslash N_S^T(\RR)$.  Thus
\begin{equation}\label{eqnOpenLinkCohomology}
H(\lk_S \P_T(\EE)) = H(\Gamma_{N_S^T}\backslash
N_S^T(\RR);\EE)[-p(T)] \cong  H(\nn_S^T, E)[-p(T)]\ .
\end{equation}
Here we use the theorem of Nomizu and van Est \citelist{\cite{refnvanEst}
  \cite{refnNomizu}} for the last isomorphism with Lie algebra cohomology.
In fact the action of $L_S$ on $\nn_S^T$ induces an action on $H(\nn_S^T,E)$ and
the isomorphism in \eqref{eqnOpenLinkCohomology} is an isomorphism of
$L_S$\n-modules.

If we combine this with Kostant's theorem we obtain
\begin{equation}\label{eqnLinkCohomologyDepthOne}
H(\lk_S \P_T(\EE)) \cong \bigoplus_{w\in W_S^T}
H^{\l(w)}(\nn_S^T,E)_w[-\l(w) - p(T)] \ .
\end{equation}

Note that to go from here to $H(i_S^* \P_T(\EE))$, according to
\eqref{eqnICVanishing} and \eqref{eqnLocalCohomology},  we need to truncate
this cohomology, leaving just  degrees $< p(S)$.  Thus the sum will now
only include $w$ satisfying  $\l(w) + p(T) \le p(S) -1$, or
\[
\l(w) \le p(S) - p(T) - 1 = \pbar_T(S) \ .
\]
Thus for relative depth $1$
\begin{equation}\label{eqnLocalCohomologyDepthOne}
H(\i_S^* \P_T(\EE)) \cong \bigoplus_{\substack{w\in W_S^T \\
                                     \l(w) \le \pbar_T(S)}}
H^{\l(w)}(\nn_S^T,E)_w[-\l(w) - p(T)]
\end{equation}
(compare \S\ref{ssectComparison}).

\subsection{Relative depth $2$}
If $\depth_T S = 2$, the simplex $|\Delta_S^T|$ is a $1$\n-simplex.  The
endpoints correspond to the two intermediate strata: $S \prec Q_1, Q_2
\prec T$.  The cohomology of the interior of the link is the same as
\eqref{eqnLinkCohomologyDepthOne}, however each term may be truncated at
$Q_1$ or at $Q_2$ or at both --- see \eqref{eqnICVanishing} as well as
\eqref{eqnLocalCohomologyDepthOne}.  The terms that remain are those that
are not truncated at either side or (with an additional shift by $-1$)
those that are truncated at both sides.  This corresponds to the fact that
the cohomology of a $1$\n-simplex relative to either one of its endpoints
vanishes, while the cohomology relative to both endpoints is one
dimensional in degree $1$.

To precisely express these side truncations, write $W^T = W^{Q_1} W_{Q_1}^T
= W^S W_S^{Q_i}W_{Q_i}^T$.  In fact  $W_S^T = W_S^{Q_i}W_{Q_1}^T$
and for $w\in W_S^T$ we decompose $w = w^{Q_i} w_{Q_i}$ accordingly.
Define
\[
\l_{Q_i}(w) = \l(w_{Q_i})\qquad (w = w^{Q_i} w_{Q_i})\ .
\]
Then we have the formula
\begin{multline}\label{eqnLinkCohomologyDepthTwo}
H(\lk_S \P_T(\EE)) \cong \bigoplus_{\substack{w\in W_S^T\\
                                     \l_{Q_1}(w) \le \pbar_T(Q_1)\\
                                     \l_{Q_2}(w) \le \pbar_T(Q_2)}}
H^{\l(w)}(\nn_S^T,E)_w[-\l(w) - p(T)]  \\
\oplus  \bigoplus_{\substack{w\in W_S^T\\
                                     \l_{Q_1}(w) > \pbar_T(Q_1)\\
                                     \l_{Q_2}(w) > \pbar_T(Q_2)}}
H^{\l(w)}(\nn_S^T,E)_w[-\l(w) - p(T)- 1]
\end{multline}

\subsection{Relative depth $3$ and
  higher}\label{ssectLinkCohomologyGeneralDepth}
One can similarly write an explicit formula for $H(\lk_S \P_T(\EE))$ when
$\depth_T S = 3$ however there are many more  possible configurations of
truncations on the $6$ intermediate strata $Q$; these are illustrated in
\cite{refnSaperCDM}*{\S18.4, Figure~24} and result in additional degree
shifts of $0$, $-1$, and $-2$.

There is a general formula for arbitrary depth that involves the
intersection cohomology of $|\Delta_S^T|$ associated to a special
perversity:
\[
\pbar_{T,w}(Q)= \pbar_T(Q) - \l_{Q}(w) = p(Q) - p(T) - 1- \l_Q(w)
\qquad (S\preccurlyeq Q \preccurlyeq T)\ .
\]
Let
\[
I_{\pbar_{T,w}}H^\cdot (|\Delta_S^T|) = H^\cdot(\I_{\pbar_{T,w}}\C(|\Delta_S^T|,\ZZ))
\]
be the hypercohomology of the corresponding Deligne sheaf
\eqref{eqnDelignesSheaf}.  Then
\begin{equation}
H(\lk_S \P_T(\EE)) \cong  \bigoplus_{w\in W_S^T}
H^{\l(w)}(\nn_S^T,E)_w[-\l(w)-p(T)]\otimes I_{\pbar_{T,w}}H(|\Delta_S^T|) \ .
\end{equation}

\section{Example Computations}\label{sectComputations}
To illustrate the results of \S\ref{sectExtensions}, we calculate
all extensions between simple perverse sheaves for the reductive Borel-Serre
compactification $\Xhat$ of the locally symmetric spaces associated with
$G=\Sp(4,\RR)$.  Since
$\Ext^1_{\Perv}(\P_T(\EE),\P_{T'}(\EE'))= 0$ unless $T\prec T'$ or
$T\succ T'$, we can assume both $T$ and $T'$ are standard parabolic
$\QQ$\n-subgroups.

For brevity we only consider $p=p_-$ and we give complete details for
$T\succ T'$; the results for $T\prec T'$ are summarized
in \S\ref{ssectResultsTprecTprime}.

\subsection{Preliminaries}
Here the root system is type $C_2$ with simple $\QQ$\n-roots
$\Delta=\{\al_1= e_1-e_2, \al_2=2e_2\}$. 
The standard parabolic
$\QQ$\n-subgroups $P_I$ correspond to subsets  $I\subseteq \{1,2\}$
($\Delta^{P_I} = \{\alpha_i\}_{i\in I} \subseteq \Delta$) and so
form a lattice
\begin{equation*}
\vcenter{\xymatrix @ur @M=1pt @R=1.5pc @C=1.5pc {
{P_1} \ar@{-}[r] \ar@{-}[d] & {G} \ar@{-}[d] \\
{P_\emptyset\rlap{\qquad .}} \ar@{-}[r] & {P_2}
}} \label{eqnParallelogram}
\end{equation*}
The real dimension of $X$ is $6$, while strata corresponding to $P_1$ and
$P_2$ have dimension $2$, and that corresponding to $P_\emptyset$ has
dimension $0$.  Thus the perversity values $p_-(S)$ are
\begin{equation*}
\vcenter{\xymatrix @ur @M=3pt @R=1.5pc @C=1.5pc {
{-1} \ar@{-}[r] \ar@{-}[d] & {-3} \ar@{-}[d] \\
{0\rlap{\qquad .}} \ar@{-}[r] & {-1}
}} \label{eqnPerversityParallelogram}
\end{equation*}
To calculate link cohomology we need the Weyl group which is generated by
the reflections $s_1$ and $s_2$ in the simple roots modulo the relation
$(s_1s_2)^3=e$:
\[
W = W_{P_\emptyset} = \{e, s_1,s_2, s_1s_2, s_2s_1, s_1s_2s_1, s_2s_1s_2\}\ .
\]
For the intermediate strata we have
\[
W^{P_1}=W^{P_1}_{P_\emptyset} = \{e, s_1\} \qquad \text{and} \qquad
W^{P_2}=W^{P_2}_{P_\emptyset} = \{e, s_2\}
\]
and
\[
W_{P_1} = \{e, s_2, s_2s_1, s_2s_1s_2\} \qquad \text{and} \qquad
W_{P_2} = \{e, s_1, s_1s_2, s_1s_2s_1\} \ .
\]

\subsection{The case $T=G$}
We first set $T=G$ and $E$ an irreducible algebraic representation of $G$
with highest weight $\lambda$.  The associated classical perversity
$\pbar_G(S) = p(S)-p(G)-1$ has values
\begin{equation*}
\vcenter{\xymatrix @ur @M=3pt @R=1.5pc @C=1.5pc {
{1} \ar@{-}[r] \ar@{-}[d] & {-1} \ar@{-}[d] \\
{2\rlap{\qquad .}} \ar@{-}[r] & {1}
}}
\end{equation*}
We look for nonzero extensions in
$\Ext^1_{\Perv_-}(\P_{G}(\EE),\P_{T'}(\EE'))$ for $T'\prec G$, where $E'$
is an irreducible algebraic representation of $L_{T'}$.  To start
constructing such an extension, according to
Lemma~\ref{lemStartOfExtension}\ref{itemTGreaterThanTprime}, we need an
$L_{T'}$-module morphism
\[
%H^{p(T')-1}(\lk_{P_{T'}} \P_G(\EE))  \longrightarrow E'\ .
H^{p(T')-1}(\lk_{T'} \P_G(\EE))  \longrightarrow E'\ .
\]

We calculate the above link cohomology.  We begin with $T'=P_\emptyset$ for
which we use
\eqref{eqnLinkCohomologyDepthTwo}.  In degree $p(T')-1$, a contribution
from the first direct sum of \eqref{eqnLinkCohomologyDepthTwo} requires
$\l(w) = \pbar_G(P_\emptyset)= 2$ and no truncation on either side.  This
means $w=s_1s_2$ or $s_2s_1$.  Now $\l_{P_1}(s_1s_2) = 1\le 1 =
\pbar_G(P_1)$ (since $s_1s_2 = (s_1)(s_2) \in W^{P_1}W_{P_1}$) and
$\l_{P_2}(s_1s_2) = 2 \not \le 1 = \pbar_G(P_2)$ (since $s_1s_2 =
(e)(s_1s_2) \in W^{P_1}W_{P_1}$).  Thus $s_1s_2$ cannot contribute to the
link cohomology since the corresponding class is truncated on one side but
not the other.  Similarly $s_2s_1$ does not contribute since
$\l_{P_1}(s_2s_1) = 2\not\le 1$ and $\l_{P_2}(s_2s_1) = 1 \le 1$.

A contribution in degree $p(T')-1$ from the second  direct sum of
\eqref{eqnLinkCohomologyDepthTwo} requires $\l(w)= \pbar_G(P_\emptyset)-1 =
1$ and  truncation on both sides.  Thus $w=s_1$ or $w_2$.   For
$w=s_1$, $\l_{P_1}(s_1) = 0 \not> \pbar_G(P_1)=1$ and $\l_{P_2}(s_1) = 1 \not>
\pbar_G(P_2)=1$ and thus does not contribute, and similarly for $w=s_2$.

Thus $H^{p(P_\emptyset)-1}(\lk_{P_\emptyset} \P_G(\EE)) =0$ and
\[
\Ext^1_{\Perv_-}(\P_{G}(\EE),\P_{P_\emptyset}(\EE'))=0
\]
for all $E'$.

We now consider the link cohomology at $T'=P_1$ or $P_2$.  Since we are
interested in degree $p(T')-1$, a contribution from
\eqref{eqnLinkCohomologyDepthOne} requires $\l(w) = \pbar_G(T') = 1$.  Thus
\begin{equation}
H^{p(T')-1}(\lk_{T'} \P_G(\EE)) =\begin{cases}
       H^1(\nn_{P_1},E)_{s_2}      & T'=P_1, \\
       H^1(\nn_{P_2},E)_{s_1}      & T'=P_2. \\
\end{cases}
\end{equation}

This means that $\Ext^1_{\Perv_-}(\P_{G}(\EE),\P_{P_1}(\EE'))$ can only be
nonzero (in which case it is $\CC$) when $E'$ has highest weight
$s_2(\lambda+\rho)-\rho$.   By Lemma ~\ref{lemProlongExtension}, this
extension will actually exist provided the map of $L_{P_1}$\n-modules
\[
\phi\colon H^{-2}(\lk_{P_1} \P_G(\EE))= H^1(\nn_{P_1},E)_{s_2} \longrightarrow E'
\]
(which is an isomorphism), induces the zero map
\begin{equation}\label{eqnMapOnLink} 
H^{p(P_\emptyset)-1}(\lk_{P_\emptyset} \P_G(\EE)) \longrightarrow
H^{p(P_\emptyset)}(\lk_{P_\emptyset} \P_{P_1}(\EE')).
\end{equation}
However we have already seen that
$H^{p(P_\emptyset)-1}(\lk_{P_\emptyset} \P_G(\EE)) =0$, thus this condition
is satisfied.

So we find that
\begin{equation}
%\Ext^1_{\Perv_-}(\P_{G}(\EE),\P_{P_{P_1}}(\EE'))= \begin{cases}
\Ext^1_{\Perv_-}(\P_{G}(\EE),\P_{P_1}(\EE'))= \begin{cases}
        \CC   & \text{if $E'$ has highest weight $s_2(\lambda+\rho)-\rho$,} \\
        0     & \text{otherwise.}
\end{cases}
\end{equation}
A similar argument shows
\begin{equation}
%\Ext^1_{\Perv_-}(\P_{G}(\EE),\P_{P_{P_2}}(\EE'))= \begin{cases}
\Ext^1_{\Perv_-}(\P_{G}(\EE),\P_{P_2}(\EE'))= \begin{cases}
        \CC   & \text{if $E'$ has highest weight $s_1(\lambda+\rho)-\rho$,} \\
        0     & \text{otherwise.}
\end{cases}
\end{equation}

\subsection{The case $T=P_1$ and $P_2$}
Fix $i=1$ or $2$, and say $E$ is an irreducible algebraic representation of
$L_{P_i}$ with highest weight $\lambda$.  By Lemma
~\ref{lemStartOfExtension}\ref{itemTGreaterThanTprime}
\[
\Ext^1_{\Perv_-}(\P_{P_i}(\EE),\P_{P_\emptyset}(\EE')) \cong 
\Mor_{L_{P_\emptyset}}(H^{p(P_\emptyset)-1}(\lk_{P_\emptyset} \P_{P_i}(\EE)), E')
\]
for any irreducible algebraic representation $E'$ of $L_{P_\emptyset}$.
Since $L_{P_\emptyset}$ is just the maximal torus, $E'= \CC_\chi$ where
$\chi $ is the character by which $L_{P_\emptyset}$ acts.  We
calculate by \eqref{eqnLinkCohomologyDepthOne} that
\[
H^{p(P_\emptyset)-1}(\lk_{P_\emptyset}\P_{P_i}(\EE)) = 
\bigoplus_{\substack{w\in  W_{P_\emptyset}^{P_i} \\ \l(w) = \pbar_{P_i}(P_\emptyset)}}
H^{\l(w)}(\nn_{P_\emptyset}^{P_i},E) 
=  H^{0}(\nn_{P_\emptyset}^{P_i},E)_{e}
\]
since $\pbar_{P_i}(P_\emptyset) =0$.  Thus we find
\begin{equation}
%\Ext^1_{\Perv_-}(\P_{P_i}(\EE),\P_{P_{P_\emptyset}}(\EE'))= \begin{cases}
\Ext^1_{\Perv_-}(\P_{P_i}(\EE),\P_{P_\emptyset}(\EE'))= \begin{cases}
        \CC   & \text{if $E' = \CC_\lambda$,} \\
        0     & \text{otherwise.}
\end{cases}
\end{equation}

\subsection{Results for $T\prec T'$}
\label{ssectResultsTprecTprime}
Let $E'$ be an irreducible algebraic representation of $L_{T'}$ with
highest weight $\lambda$.  Similarly to the above, one may check that for
$T \prec T'$, the only nonzero extensions are as follows:
\begin{align}
%\Ext^1_{\Perv_-}(\P_{\emptyset}(\EE),\P_{P_{P_1}}(\EE'))&= \begin{cases}
\Ext^1_{\Perv_-}(\P_{P_\emptyset}(\EE),\P_{P_1}(\EE'))&= \begin{cases}
        \CC   & \text{if $E = \CC_{s_1(\lambda+\rho)-\rho}$,} \\
        0     & \text{otherwise.}
\end{cases}\\
%\Ext^1_{\Perv_-}(\P_{\emptyset}(\EE),\P_{P_{P_2}}(\EE'))&= \begin{cases}
\Ext^1_{\Perv_-}(\P_{P_\emptyset}(\EE),\P_{P_2}(\EE'))&= \begin{cases}
        \CC   & \text{if $E = \CC_{s_2(\lambda+\rho)-\rho}$,} \\
        0     & \text{otherwise.}
\end{cases}\\
%\Ext^1_{\Perv_-}(\P_{P_1}(\EE),\P_{P_G}(\EE'))&= \begin{cases}
\Ext^1_{\Perv_-}(\P_{P_1}(\EE),\P_{G}(\EE'))&= \begin{cases}
        \CC   & \text{if $E$ has highest weight $s_2s_1(\lambda+\rho)-\rho$,}\\
        0     & \text{otherwise.}
\end{cases}\\
%\Ext^1_{\Perv_-}(\P_{P_2}(\EE),\P_{P_G}(\EE'))&= \begin{cases}
\Ext^1_{\Perv_-}(\P_{P_2}(\EE),\P_{G}(\EE'))&= \begin{cases}
        \CC   & \text{if $E$ has highest weight $s_1s_2(\lambda+\rho)-\rho$,}\\
        0     & \text{otherwise.}
\end{cases}
\end{align}

\bibliography{../references}

\end{document}